\documentclass[letterpaper, 11pt,  reqno]{amsart}

\usepackage{amsmath,amssymb,amscd,amsthm,amsxtra, esint}

\usepackage[implicit=true]{hyperref}

\setlength{\pdfpagewidth}{8.50in}
\setlength{\pdfpageheight}{11.00in}

\usepackage[left=32mm, right=32mm, 
bottom=27mm]{geometry}

\usepackage{mathtools}
\usepackage{enumerate}

\newtheorem{Theorem}{Theorem}
\newtheorem{Corollary}[Theorem]{Corollary}
\newtheorem{Lemma}[Theorem]{Lemma}
\newtheorem{Proposition}[Theorem]{Proposition}

\theoremstyle{definition}

\newtheorem{Remark}[Theorem]{Remark}
\numberwithin{Theorem}{section}

\numberwithin{equation}{section}

\newenvironment{renumerate}{\begin{enumerate}[\normalfont{(}i\normalfont{)}]}{\end{enumerate}}                                                                              

\newcommand{\norm}[1]{\left\|{#1}\right\|}
\newcommand{\wick}[1]{:\hspace{-2pt}{#1}\hspace{-2pt}:}
\newcommand{\jap}[1]{\left\langle{#1}\right\rangle}

\renewcommand{\epsilon}{\varepsilon}
\newcommand{\T}{\mathbb T}
\renewcommand{\H}{\mathcal H}
\newcommand{\N}{\mathbb N}
\renewcommand{\P}{\mathbb P}
\newcommand{\E}{\mathbb E}
\newcommand{\Z}{\mathbb{Z}}
\newcommand{\R}{\mathbb R}
\newcommand{\C}{\mathbb C}
\renewcommand{\d}{\mathrm d}

\newcommand{\loc}{\mathrm{loc}}

\DeclareMathOperator{\supp}{supp}

\newcommand{\dual}[2]{\left\langle#1,#2\right\rangle}

\title[GWP for Stochastic Wave Equation on $\R^2$]{Global well-posedness 
of the two-dimensional
stochastic nonlinear wave equation 
on an unbounded
domain}
\author{Leonardo Tolomeo}

\address{
	Leonardo Tolomeo\\
	Mathematical Institute\\ Hausdorff Center for Mathematics \\
	Universit\"at Bonn\\
	Bonn \\
	Germany} 
\email{tolomeo@math.uni-bonn.de}

\keywords{stochastic nonlinear wave equation; nonlinear wave equation; renormalization; Wick ordering; Hermite polynomial; white noise.}
\subjclass[2010]{35L71, 60H15}

\begin{document}

\maketitle

\begin{abstract}
We study the two-dimensional wave equation with cubic nonlinearity posed on $\R^2$, with space-time white noise forcing. After a suitable renormalisation of the nonlinearity, we prove global well-posedness for this equation for initial data in $\H^s$, $s>\frac45$. 
\end{abstract}
\section{Introduction}
We consider the following stochastic nonlinear wave equation (SNLW) with additive space-time white noise forcing 
\begin{equation} \label{SNLW0}
\begin{cases} 
u_{tt} = \Delta u -{u}^3 + 3\infty\cdot u + \d W, \\
u(0,\cdot) = u_0 \in H^s_\loc, \\
u_t(0,\cdot) = u_1 \in H^{s-1}_\loc,
\end{cases}
\end{equation}
where $\d W$ denotes a space-time white noise on $\R^2$. The term ``$-3\infty\cdot u$" in the equation denotes a time-dependent renormalisation, which has been first introduced by Gubinelli, Koch and Oh in \cite{gko18} for a family of stochastic wave equations with power nonlinearities. In this work, the authors prove local well-posedness for renormalised stochastic wave equations posed on $\T^2$ with certain polynomial nonlinearities.

The necessity to apply  such a renormalisation in oder to get nontrivial solutions was highlighted in a series of works by Albeverio, Haba, Oberguggenberger, and Russo \cite{ahr96,or98,or01,r83}, and more recently by Oh, Okamoto and Robert \cite{oor19}, who showed that without the renormalisation term, solutions to \eqref{SNLW0} must satisfy a linear wave equation.

The renormalisation that we apply in \eqref{SNLW0} is better described as follows. Recall that $\d W$ is defined to be a distribution-valued random
variable such that for every test function $\phi$, $\dual{\phi}{\d W}$ is a gaussian random variable with mean $0$ and variance 
\begin{equation}\label{universalProperty}
\E[|\dual{\phi}{\d W}|^2] = \norm{\phi}_{L^2}^2.
\end{equation}
Ignoring the term with $\infty$ in \eqref{SNLW0}, we consider a perturbative expansion $u = v + \psi$, where 
\begin{equation}
\psi := \int_0^t \frac{\sin((t-t')|\nabla|)}{|\nabla|} \d W(t')
\end{equation}
solves the linear wave equation
$$\psi_{tt} = \Delta \psi + \d W. $$
Formally, the term $v$ would then solve the equation
$$v_{tt} - \Delta v = - (\psi + v)^3 = - \psi^3 - 3 \psi^2 v - 3 \psi v^2 - v^3. $$
However, because of the roughness of $\d W$, it can be shown that the terms $\psi^3$, $\psi^2$ do not make sense as space-time distributions. Therefore, we introduce the Wick renormalisation 
\begin{equation} \label{renormalisation}
\begin{aligned}
\wick{\psi^2}\,&= \psi^2 - \E[|\psi|^2], \\
\wick{\psi^3}\,&= \psi^3 - 3\E[|\psi|^2] \psi,
\end{aligned}
\end{equation}
and \emph{define} $v = u - \psi$ to solve the equation
\begin{equation} \label{SNLWv}
\begin{cases}
v_{tt} - \Delta v = - \wick{\psi^3} - 3 \wick{\psi^2} v^2 - 3 \psi v - v^3 \\
v(0,\cdot) = u_0 \in H^s_\loc(\R^2), \\
v_t(0,\cdot) = u_1 \in H^{s-1}_\loc(\R^2).
\end{cases}
\end{equation}
While both terms on the right hand side of \eqref{renormalisation} diverge (for both definitions), it is actually possible to give a meaning to the renormalised terms $\wick{\psi^2}, \wick{\psi^3}$ by first taking a smooth approximation of the noise $\d W$ and then taking a limit in the space $W^{-\epsilon,\infty}_{\loc}$. This will be carried out explicitly in Section 2. 
Denoting (formally) $ \wick{u^3}\,= u^3 - 3 \E[|\psi|^2] u$, solving the equation \eqref{SNLWv} for $v$ corresponds to solving the equation 
\begin{equation} \label{SNLW} \tag{SNLW}
\begin{cases} 
u_{tt} = \Delta u -\wick{u}^3 + \d W, \\
u(0,\cdot) = u_0 \in H^s_\loc, \\
u_t(0,\cdot) = u_1 \in H^{s-1}_\loc
\end{cases}
\end{equation}
for $u$. Since $\E[|\psi|^2] = + \infty$ for every $t>0$, by inserting this into \eqref{SNLW} we obtain the formula \eqref{SNLW0}. This kind of renormalisation is exactly the same that appears in \cite{gko18} for the cubic wave equation on the torus.

Before stating our main result, we need to define what we mean by solutions of \eqref{SNLW0}. As we already discussed, we write $u = \psi + v$, and we require $v$ to solve the mild formulation of \eqref{SNLWv},
\begin{multline} \label{SNLWvMild}
v = \cos(t|\nabla|)u_0 + \frac{\sin(t|\nabla|)}{|\nabla|}u_1 \\
+ \int_0^t \frac{\sin((t-t')|\nabla|)}{|\nabla|} \Big(- \wick{\psi^3}(t') - 3 \wick{\psi^2}(t') v(t') - 3 \psi v^2(t') - v^3(t')\Big)\d t'.
\end{multline}

\begin{Theorem}
\label{main}
Consider the equation \eqref{SNLW} on $\R^2$, and let $1>s>\frac45$. Then \eqref{SNLW} is almost surely globally well-posed. 
More precisely, for every $(u_0,u_1) \in \H^s_\loc$, there exists a unique $\psi + C(\R;H^s_\loc(\R^2))$-valued random variable $u$ such that almost surely
\begin{itemize}
\item for every stopping time $T>0$, $u|_{[-T,T]}$ is the unique solution to \eqref{SNLW} in the space $\psi + C([-T,T];H^s_\loc)$,
\item for every time $0<T<\infty$, $u$ is continuous in the initial data $(u_0,u_1)$.
\end{itemize}
\end{Theorem}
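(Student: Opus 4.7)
The plan is to decouple the stochastic analysis from the PDE analysis: Section~2 provides, for each finite time $T$ and each ball $B_R$, an almost surely finite bound on $\mathbf\Psi := (\psi, \wick{\psi^2}, \wick{\psi^3})$ in $C([-T,T];W^{-\epsilon,\infty}(B_R))$ for some small $\epsilon>0$, and Theorem~\ref{main} will then be established as a deterministic statement about \eqref{SNLWvMild}, regarded as a forced wave equation driven by the data $\mathbf\Psi$. Two ingredients are needed: local well-posedness of $v$, and an a priori bound on $\|v(t)\|_{H^s_\loc}$ that rules out finite-time blow-up.

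For \textbf{local well-posedness}, I would run a Picard contraction on \eqref{SNLWvMild} in a Strichartz-type resolution space $X^s_T\subset C([-T,T];H^s_\loc)\cap C^1([-T,T];H^{s-1}_\loc)$ adapted to the wave equation on $\R^2$. The nonlinear right-hand side splits into three kinds of contributions---the pure forcing $\wick{\psi^3}$, the mixed products $\wick{\psi^2}v$ and $\psi v^2$, and the cubic self-interaction $v^3$---each of which is estimated by combining Strichartz and energy estimates with fractional product rules of the type $H^s\cdot W^{-\epsilon,\infty}\hookrightarrow H^{s-\epsilon-\delta}$ (valid whenever $s>\epsilon$) and Sobolev embeddings on $\R^2$. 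Finite speed of propagation reduces the $H^s_\loc$ theory to a bounded-domain theory on a family of enlarging balls, and the outcome is a unique solution on an interval $[-T^*,T^*]$ with $T^*=T^*\bigl(\|(u_0,u_1)\|_{\H^s(B_R)},\|\mathbf\Psi\|_{T,R}\bigr)$, together with continuous dependence.

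For the \textbf{global extension}, since $s<1$ one cannot directly use the natural $H^1\times L^2\times L^4$ energy on $v$. I would therefore adopt a Bourgain-style high--low decomposition: split $v=v_{\mathrm{lo}}+v_{\mathrm{hi}}$ by a frequency cutoff $N$, propagate the finite modified energy
\begin{equation*}
E_R(v_{\mathrm{lo}})(t) \;=\; \frac12\int\chi_R\bigl(|\nabla v_{\mathrm{lo}}|^2+(\partial_tv_{\mathrm{lo}})^2\bigr)\,dx \;+\; \frac14\int\chi_R\,v_{\mathrm{lo}}^4\,dx,
\end{equation*}
with $\chi_R$ a smooth spatial cutoff to $B_{R+T}$, and control the $H^s$-norm of $v_{\mathrm{hi}}$ by iterating local well-posedness on short time-steps. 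Differentiating $E_R(v_{\mathrm{lo}})$ along the frequency-projected equation for $v_{\mathrm{lo}}$ produces terms that pair $\partial_tv_{\mathrm{lo}}$ with $\wick{\psi^3}$, $\wick{\psi^2}v$, $\psi v^2$, plus boundary contributions from $\chi_R$ (absorbed via finite speed of propagation) and commutators between the frequency projection and the product structure; these are estimated using fractional Leibniz together with the $W^{-\epsilon,\infty}$ bounds on $\mathbf\Psi$.

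The \textbf{main obstacle} is precisely this modified-energy estimate. Controlling the rough products $\wick{\psi^2}v$ and $\psi v^2$ requires trading $W^{-\epsilon,\infty}$ losses for $H^s$-gains on $v$ and for powers of $N$ from the frequency truncation, and balancing these gains against the growth of $v_{\mathrm{hi}}$ in $H^s$ is what forces the threshold $s>\tfrac45$. Once a polynomial-in-$t$ a priori bound on $E_R(v_{\mathrm{lo}})$ (and hence on $\|v\|_{\H^s(B_R)}$) is in place, a standard continuation/stopping-time argument combines it with local well-posedness to produce the unique global solution $u=\psi+v$, continuous in $(u_0,u_1)$ on every compact time interval.
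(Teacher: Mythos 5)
The broad strategy of your proposal --- quantify the stochastic objects, localize via finite speed of propagation, prove local well-posedness, then establish an a priori $H^s$-bound to continue --- matches the paper's outline. However, there is a genuine gap in the a priori bound step, which is the heart of the argument. You assert that the rough products $\wick{\psi^2}v$ and $\psi v^2$ in the modified-energy estimate can be closed ``using fractional Leibniz together with the $W^{-\epsilon,\infty}$ bounds on $\mathbf{\Psi}$.'' For the term $\int \partial_t v_{\mathrm{lo}}\, v_{\mathrm{lo}}^2\, \psi$ this is false as written: the energy controls $\|v_t\|_{L^2}$ and $\|v\|_{L^4}$, so $v_t v^2$ lives only in $L^1$, and a $W^{-\epsilon,\infty}$ bound on $\psi$ gives no usable dual pairing --- one would need $\psi$ essentially in $L^\infty$, which it is not, and no amount of fractional Leibniz repairs a purely negative-regularity factor against an $L^1$ density.

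What makes the paper's proof work is a much sharper input: Proposition~\ref{psi}(v) gives the large-deviation bound $\P\left(\|I_N(\rho\psi)\|_{L^p_{t,x}} > \lambda\, p^{1/2}\log^{1/2}N\right) \le C^p\lambda^{-p}$, with the Gaussian growth in $p$ made explicit. The worst term \eqref{worst} is then estimated with $p = \eta^{-1}$ and $\eta\sim(\log N)^{-1}$, so the Gronwall constant is $\sim\log N$; this is balanced against an allowed energy-growth window from $N^\beta$ up to $N^\alpha$, i.e.\ a multiplicative factor $N^{\alpha-\beta}=\exp((\alpha-\beta)\log N)$. The upshot (Lemma~\ref{tau}) is a lifespan $\tau$ for the $I$-energy that is \emph{independent of $N$}. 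Since after time $\tau$ the energy may have reached $N^\alpha$, the cutoff must be enlarged to $N_{k+1}\gg N_k$ before the next step; this time-dependent choice of $N$ is exactly what the author flags as the novelty of the argument, and it replaces the ``polynomial-in-$t$'' bound you claim, which the paper does not and cannot obtain (the bound lives only on a high-probability event, and it grows far faster than polynomially along the iteration). Two further structural differences you should be aware of: the paper uses the $I$-method proper (the functional $E(I_N v, I_N v_t)$ and its commutator terms, Lemmas~\ref{commI}--\ref{commutator}) rather than a Bourgain high--low splitting, which yields a cleaner commutator algebra; and the localization is built into the equation by inserting the cutoff $\rho$ directly into \eqref{SNLWloc}, rather than running a contraction for the full $\R^2$ problem and localizing afterwards --- the introduction explains that any perturbative argument on the full plane must fail because $\|\psi(t)\|_X=+\infty$ for every translation-invariant norm $X$.
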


In the recent years, there have been many developments in the study of global solutions for parabolic stochastic SPDEs, both on a compact domain (see for instance \cite{h18,mw15,mw17} for a study of $\Phi^4_d$ models), and more recently, on the euclidian space (\cite{cmw19,gh19}). 
However, the dispersive nature of the wave equation does not allow to extend the techniques developed for the study of stochastic quantisation equation to the study of equation \ref{SNLW0}.

On the other hand, there are not many global well-posedness results available for stochastic dispersive equations, and this result is the first one the author is aware of that deals with a non compact domain. In \cite{mptw}, the author and his collaborators showed global well-posedness for the nonlinear stochastic beam equation posed on $\T^3$. In \cite{gkot}, the author together with the authors of \cite{gko18}, showed a similar result to Theorem \ref{main}, by extending the solutions of the stochastic wave equation with cubic nonlinearity posed on $\T^2$ built in \cite{gko18} for infinite time. The main difference between this result and Theorem \ref{main} is that we do not have a local well-posedness result available for \eqref{SNLW}. Indeed, due to the unboundedness of the domain, one can show that for every $t>0$, the stochastic convolution $\psi(t)$ satisfies $\norm{\psi(t)}_{X} = +\infty$ 
for every translation invariant norm $\norm{\cdot}_X$ defined on a subspace $X$ of distributions. 
This in turn implies that given $\sigma \in \R$, $t,R>0$, a typical solution of \eqref{SNLWvMild} could satisfy \emph{a priori} $\norm{u(t)}_{H^\sigma(B)} \ge R$ on some ball $B$. Therefore, any perturbative argument for local well-posedness (such as a Banach fixed point argument) is bound to fail.

We therefore follow a different strategy. We take a cutoff function $\rho$ with compact support with $\rho \equiv 1$ on a big ball $B=B(0,R)$,
and consider a localised version of the equation for $v$ given by
\begin{equation} \label{SNLWloc}
\tilde v_{tt}  - \Delta \tilde v + \tilde v^3 + 3\tilde v^2 \rho\psi + 3\tilde v \rho \wick{\psi^2} + \rho \wick{\psi^3}\,\, = 0.
\end{equation}
Notice that in the ball $B$, this equation is exactly the same as \eqref{SNLWv}. Because of finite speed of propagation, we expect $\tilde v(t) = v(t)$ in the ball $B(0,R-t)$\footnote{This will be made more rigorous in Section 5.}. Therefore, we reduce the problem of showing global well-posedness for
\eqref{SNLW}, to showing global well-posedness for \eqref{SNLWloc}, independently of the cutoff $\rho$. 

In order to do so, we show an energy estimate. We would like to estimate the functional 
$$E(\tilde v(t), \partial_t \tilde v(t)) := \frac 12 \int |\partial_t \tilde v|^2 + \frac12 \int |\nabla \tilde v|^2 + \frac14 \int (\tilde v)^4, $$
since it is conserved by the flow of cubic wave equation with no forcing. However, as we will see in Section 2, $\psi \notin L^2_\loc(\R^2)$, 
so we expect $\tilde v \notin H^1$, hence $E(\tilde v, \partial_t \tilde v) = +\infty$. 

In order to deal with the lack of regularity of $\tilde v$, we 
make use of the \linebreak I-method developed by Colliander, Keel, Staffilani, Takaoka and Tao in 2002 to show global well-posedness for dispersive equations with initial data of regularity below the energy space. We consider an operator $I = I_N$ given by the Fourier multiplier  $m_N$ with 
\begin{equation*}
m_N(\xi) = 
\begin{cases}
1 & \text{ for } |\xi| \le N,\\
\Big(\frac{N}\xi\Big)^{1-s} & \text{ for } |\xi| \ge 3N.
\end{cases}
\end{equation*}
For every $N$, one has that $I\tilde v \in H^1$ if and only if $\tilde v \in H^s$, and $I$ is the identity at low frequencies. Therefore, 
we can bound $\norm{\tilde v}_{H^s}$ by making use of the functional 
\begin{equation*}
F = E(I \tilde v(t), I \partial_t \tilde v(t)).
\end{equation*}
Of course, the functional $F$ will not be time independent, but we can bound it by making use of a Gronwall argument. We have that 
$$\frac \d{\d t} F = \text{ commutator terms } + \text{ forcing terms}$$ 
The commutator terms are typical of the I-method, and we can estimate them by
making use of simple harmonic analytic techniques. 

The terms coming from the forcing are more subtle to estimate, and in particular the term 
$$-3\int  Iv_t(Iv)^2I(\rho\psi) $$
requires a very sharp large deviation estimate for the $L^p$ norm of the term $I(\rho\psi)$. 

Putting the estimates together, for fixed $N$, allows us to show existence of $\tilde v$ for a random time $O(1)$. Iterating this procedure
on a sequence $N_0 \ll N_1 \ll N_2 \ll \dotsb$, we get global existence for $\tilde v$. 

We would like to point out that, as far as the author is aware, this is the first instance of an application of the $I$ - method that requires to change the cutoff parameter $N$ in a time-dependent way.

In section 5, we then conclude the proof of Theorem \ref{main} by showing that when $\rho \to 1$, the solutions $\tilde v$ converge to a limit which is continuous in the initial data $(u_0,u_1)$,
and that every solution $v$ of \eqref{SNLWv} must be equal to this limit.

\vspace{15pt}
\noindent
\textbf{Acknowledgements.} The author would like to thank is PhD supervisor Tadahiro Oh for suggesting the problem and his constant help and support. The author was supported by the European Research Council (grant no. 637995 ``ProbDynDispEq") and by The Maxwell Institute Graduate School in Analysis and its Applications, a Centre for Doctoral Training funded by the UK Engineering and Physical Sciences Research Council (grant EP/L016508/01), the Scottish Funding Council, Heriot-Watt University and the University of Edinburgh.

\section{Stochastic Convolution}
In this section, we establish relevant estimates on the stochastic convolution 
$$\psi(t,\cdot) {:= \int_{0}^t \frac{\sin((t-t')|\nabla|)}{|\nabla|}} \d W_t.$$
Let $\psi_N(t,\cdot):= \int_0^t \frac{\sin((t-t')|\nabla|)}{|\nabla|}\chi_N(\nabla) \d W_t$, where 
$\chi_N$ is the indicator function of the ball of radius $N$ in $\R^2$. Following the construction in \cite{gko18}, 
we define $\wick{\psi^l_N}$ for $0 \le l \le 3$ in the following way 
\begin{align*}
\wick{\psi^0_N}(x,t) &:= 1, \\
\wick{\psi^1_N}(x,t) &:= \psi_N(x,t), \\
\wick{\psi^2_N}(x,t) &:= \psi^2_N(x,t) - \E[\psi^2_N(x,t)], \\
\wick{\psi^3_N}(x,t) &:= \psi_N^3(x,t) - 3 \E[\psi^2_N(x,t)] \psi_N(x,t)
\end{align*}
This corresponds with the Wick renormalisation obtained through the Hermite polynomial described in \cite{gko18}.
We will prove the following:
\begin{Proposition} \label{psi}
 Let $0\le l \le 3$, let $\rho: \R^2 \to \R$ a $C^\infty$ function with compact support, let $\epsilon > 0$, and let $T>0$.
 Let $1\le p,q,r<\infty$.
\begin{renumerate}
 \item For every $N$, $\psi_N \in C^\infty_{t,x}(\R\times \R^2)$ a.s..
 \item The sequence $\rho\wick{\psi_N^l}$ is almost surely (uniformly) in 
 $L^q_TW_x^{-\epsilon,r}$. More precisely, 
 $$\norm{\norm{\rho\wick{\psi_N^l}}_{L^q_TW_x^{-\epsilon,r}}}_{L^p(\Omega)} \le C_{T,p,q,\epsilon,r}. $$
 \item The sequence $\rho\wick{\psi_N^l}$ is Cauchy in $L^p_\omega L^q_TW_x^{-\epsilon,r}$. \label{Cauchy}
\end{renumerate}
Because of the arbitrariness of $\rho$, {\normalfont{(\ref{Cauchy})}} implies that $\wick{\psi_N^l}$ has a limit in 
$L^{p}(\Omega, L^q_T W^{-\epsilon,r}_{\loc})$. We call the limit of this sequence $\wick{\psi^l}$. We have that 
\begin{renumerate}
\setcounter{enumi}{3}
 \item $\norm{\rho \wick{\psi^l}}_{L^\infty_T W_x^{-\epsilon,r}} < \infty$ a.s., and $\rho \wick{\psi^l}(t)$ is
 a.s.\ continuous in $t$ with values in $W_x^{-\epsilon,r}$.
 \item Let $m: \R^+\cup\{0\}\to [0,1]$ be a smooth function such that $m(r) = 1$ for every $r\le 1$, and 
 $\partial_r^n m(r)\lesssim_n r^{-\epsilon - n}$ for $r\ge 1$, $n\ge 0$. Let $I_N$ be the operator associated to the Fourier multiplier 
 $m_N(|\xi|):=m(|\xi|/N)$, i.e.\ $\widehat{I_N\phi}(\xi) := m(|\xi|/N) \hat\phi(\xi)$. Then
$\P\left(\norm{I_N(\rho\psi)}_{L^p_{t,x}([0,T];\R^2)} > \lambda p^\frac12 \log^\frac12 N\right) \le C_{T,\rho,m}^p  \lambda^{-p}$.  
\end{renumerate}
 \end{Proposition}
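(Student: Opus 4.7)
My plan is to base all five assertions on the Gaussian structure of $\psi_N$, combined with Wiener-chaos hypercontractivity ($\|X\|_{L^p(\Omega)}\le(p-1)^{l/2}\|X\|_{L^2(\Omega)}$ for $X$ in the $l$-th chaos), which lets me pass cheaply from second-moment bounds to $L^p(\Omega)$ bounds. Part (i) is immediate: for each fixed $N$ the truncated noise $\chi_N(\nabla)\d W$ has Fourier support in $|\xi|\le N$, so $\psi_N(t,\cdot)$ is smooth in $x$; since any space-time derivative $\partial_t^j\partial_x^\alpha\psi_N$ has a finite, locally H\"older continuous covariance, Kolmogorov's continuity criterion then gives smoothness in $(t,x)$.

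For (ii) and (iii) I would begin from the Wick-product identity (Hermite / Isserlis)
\[
\E\bigl[\wick{\psi_N^l}(t,x)\,\wick{\psi_M^l}(s,y)\bigr] = l!\,\bigl(\E[\psi_N(t,x)\psi_M(s,y)]\bigr)^l,
\]
and compute the right-hand side from the explicit Fourier representation of $\psi_N$. As $N,M\to\infty$ the covariance $\E[\psi_N\psi_M]$ converges pointwise to that of $\psi$, which is finite off the diagonal and diverges only logarithmically on it; pairing against a Bessel potential of order $-\epsilon$ removes this divergence. Multiplying by $\rho$, passing to Fourier in the Besov/Sobolev norm, applying Gaussian hypercontractivity in $\omega$, and integrating in $t\in[0,T]$ (using the compact support of $\rho$) gives the uniform bound (ii) and the Cauchy property (iii) with constants $C_{T,p,q,\epsilon,r}$. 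Part (iv) applies the same second-moment formula to temporal increments $\rho(\wick{\psi^l}(t)-\wick{\psi^l}(s))$, controlling them by a positive power of $|t-s|$; Kolmogorov's continuity criterion applied in the Banach space $W^{-\epsilon,r}_x$ then yields continuity in $t$.

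The main work is (v). For each fixed $(t,x)$, $I_N(\rho\psi)(t,x)$ is a mean-zero Gaussian (first Wiener chaos), so
\[
\E\bigl[|I_N(\rho\psi)(t,x)|^p\bigr] \le (Cp)^{p/2}\,\sigma_N(t,x)^p, \qquad \sigma_N(t,x)^2 := \E\bigl[|I_N(\rho\psi)(t,x)|^2\bigr].
\]
The plan is to show the uniform variance bound $\sup_{(t,x)\in[0,T]\times\R^2}\sigma_N(t,x)^2 \le C_{T,\rho,m}\log N$, with enough decay in $x$ that $\int\sigma_N^p\,\d x$ is finite. Writing $\psi$ as a stochastic integral against the wave kernel $G_{t-s}$ gives $I_N(\rho\psi)(t,x)=\int_0^t\!\int K^{t,x}_N(s,y)\,\d W(s,y)$ with $K^{t,x}_N(s,y)=I_N[\rho\,G_{t-s}(\cdot-y)](x)$ (so $y$ is effectively restricted to a bounded set by finite speed of propagation and compact support of $\rho$), and
\[
\sigma_N(t,x)^2 = \int_0^t\!\int_{\R^2}|K^{t,x}_N(s,y)|^2\,\d y\,\d s.
\]
After commuting $I_N$ past the multiplication by $\rho$ and applying Plancherel in $\xi$, the main contribution reduces to the standard 2D computation
\[
\int_0^T\!\int_{\R^2}m\!\left(\frac{|\xi|}{N}\right)^{\!2}\frac{\sin^2((t-s)|\xi|)}{|\xi|^2}\,\d\xi\,\d s\lesssim\log N,
\]
the logarithm being precisely the 2D UV divergence truncated at frequency $N$ by $m_N$. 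Fubini together with this variance bound and Chebyshev's inequality then yield (v) with the sharp $p^{1/2}\log^{1/2}N$ scaling: $p^{1/2}$ comes from first-chaos Gaussian moment growth, and $\log^{1/2}N$ from the UV truncation.

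The main obstacle is the interplay between $I_N$ and multiplication by $\rho$: since $\rho\psi$ and $\psi$ have different Fourier profiles, one must check both that the sharp $\log N$ growth of the variance is preserved and that $\sigma_N(t,x)^2$ has enough spatial decay (which I expect to follow from the Schwartz-like decay of the kernel of the commutator $[I_N,\rho]$, using smoothness of $m$ and compact support of $\rho$). All other ingredients—the Wick covariance identity, Wiener-chaos hypercontractivity, Kolmogorov's criterion, and finite speed of propagation for $G$—are standard.
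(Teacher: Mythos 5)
Your proposal follows essentially the same approach as the paper throughout: second-moment bounds via the Wick covariance and the Fourier representation of $\psi_N$, hypercontractivity to upgrade to $L^p(\Omega)$, and Kolmogorov continuity for (i)--(iv), and for (v) the uniform Gaussian variance bound $\lesssim\log N/\jap{x}^4$, first-chaos moment growth $p^{p/2}$, and Chebyshev. The step you flag as the main obstacle (the interplay of $I_N$ and multiplication by $\rho$) is handled in the paper by estimating $\E[|I_N(\rho\psi)|^2(t,x)]$ directly on the Fourier side as the inverse transform of $\hat\rho(\eta_1)\overline{\hat\rho(\eta_2)}\,\phi(\eta_1,\eta_2)$ restricted to the anti-diagonal $\{(x,-x)\}$, where $\phi(\eta_1,\eta_2)=\int m_N(\xi+\eta_1)\overline{m_N(\xi+\eta_2)}\gamma(t,\xi)\,\d\xi$ satisfies $\norm{\phi}_{C^k}\lesssim\log N$; the Schwartz decay of $\hat\rho$ then yields the spatial decay you anticipated.
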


In order to prove this proposition, we need some tools. Recall the Hermite polynomials generating function
\begin{equation}
 F(t,x;\sigma) := e^{tx-\frac12\sigma t^2} = \sum_{k=0}^\infty \frac{t^k}{k!} H_k(x;\sigma).
\end{equation}
For simplicity, let $F(t,x) := F(t,x;1)$, and let $H_k(x) := H_k(x;\sigma)$.
Fix $d\in\N$. Consider the Hilbert space $L^2(\R^d,\mu)$, where $\d\mu_d : = (2\pi)^{-\frac d2} \exp(-|x|^2/2).$
Then Hermite polynomials satisfy 
$$\int_\R H_k(x)H_m(x) \d \mu_1(x) = k! \delta_{km} $$
for all $k,m\in \N$. Define the homogeneous Wiener chaos of order $k$ to be an element of the form 
$\prod_{j=1}^d H_{k_j}(x_j)$, where $k = k_1+\dotsb+k_d$. Denote by $\H_k$ the closure of homogeneous Wiener
chaoses of order $k$ in $L^2(\R^d,\mu_d)$. Then, from the property $L^2(\R^d,\mu_d) = \bigotimes_{j=1}^d L^2(\R,\mu_1)$, 
we have the Ito-Wiener decomposition 
$$L^2(\R^d,\mu_d) = \bigoplus_{k=0}^\infty \H_k. $$
Consider the operator $L:= -(\Delta - x\cdot\nabla)$ (the Ornstein-Uhlenbeck operator). Then any element in $\H_k$
is an eigenvector of $L$ with eigenvalue $k$, so $\bigoplus_{k=0}^\infty \H_k$ is the spectral decomposition of
$L^2$ associated to $L$.\\
Moreover, we have the following hypercontractivity of the Ornstein-Uhlenbeck semigroup $U(t):= e^{-tL}$ due 
to Nelson \cite{n66}. 
\begin{Lemma}
 Let $q>1$ and $p\ge q$. Then, for every $u \in L^q(\R^d,\mu_d)$ and $t\ge \frac12\log(\frac{p-1}{q-1})$,
 we have 
 \begin{equation} \label{hyper}
\norm{U(t)u}_{L^p(\R^d,\mu_d)} \le \norm{u}_{L^q(\R^d,\mu_d)}.  
 \end{equation}

\end{Lemma}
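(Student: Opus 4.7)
The plan is to follow Gross's classical argument, which derives Nelson's hypercontractivity from the Gaussian logarithmic Sobolev inequality. Parameterize the inequality by setting $p(t) := 1 + (q-1)e^{2t}$, so that $p(0) = q$, $p'(t) = 2(p(t)-1)$, and $p(t) = p$ precisely at the critical time $t^\ast = \frac{1}{2}\log\frac{p-1}{q-1}$. The inequality \eqref{hyper} follows from the monotonicity statement that $N(t) := \norm{U(t)u}_{L^{p(t)}(\R^d,\mu_d)}$ is non-increasing on $[0,\infty)$; the case $t \ge t^\ast$ then follows by combining the monotonicity of $N$ with the inclusion $L^{p(t)}(\mu_d) \hookrightarrow L^p(\mu_d)$ for $p(t) \ge p$. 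By standard truncation and mollification, we may assume $u$ is smooth, bounded, and bounded below by a positive constant.

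Writing $f := U(t)u$ so that $\partial_t f = -Lf$, differentiating $N(t)^{p(t)} = \int f^{p(t)}\,\d\mu_d$, and using the integration-by-parts identity $\int g\, Lf\,\d\mu_d = \int \nabla g \cdot \nabla f\,\d\mu_d$ together with the chain-rule relation $f^{p-2}|\nabla f|^2 = \frac{4}{p^2}|\nabla f^{p/2}|^2$, a short computation shows that $N'(t) \le 0$ is equivalent, after setting $g := f^{p(t)/2}$, to
$$\int g^2 \log\frac{g^2}{\norm{g}_{L^2(\mu_d)}^2}\,\d\mu_d \le 2\int |\nabla g|^2\,\d\mu_d.$$
This is the Gaussian logarithmic Sobolev inequality with sharp constant $2$; crucially, all $p$-dependent factors cancel thanks to the precise choice $p'(t) = 2(p(t)-1)$, leaving a universal inequality.

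The main task is thus to establish this log-Sobolev inequality on $(\R^d,\mu_d)$. My preferred route is to prove it in one dimension and then tensorize, since both sides are additive across independent factors (the standard tensorization principle for log-Sobolev functionals). For the one-dimensional case, I would proceed via the two-point Bonami--Beckner inequality on $\{-1,+1\}$ (a direct calculation reducing to the case of affine functions with $g(\pm 1)>0$), tensorize to $\{-1,+1\}^n$, and pass to the Gaussian limit via the central limit theorem applied to $\frac{1}{\sqrt n}(X_1+\dotsb+X_n)$ with $X_i$ i.i.d.\ Rademacher. Alternatively, one can give a Bakry--\'Emery-type semigroup proof directly in the Gaussian setting: differentiate $\Phi(s) := \int [U(s)g^2]\log[U(s)g^2]\,\d\mu_1$ in $s$, use the commutation identity $\nabla U(s) = e^{-s}U(s)\nabla$ (which manifests the positive Bakry--\'Emery curvature of the Gaussian measure), and integrate from $s=0$ to $s=\infty$, with the boundary term at infinity handled by $U(s)g^2 \to \norm{g}_{L^2(\mu_1)}^2$.

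The main obstacle is the log-Sobolev inequality itself; once it is in hand, the reduction in paragraph two is a mechanical computation. The restriction $p \ge q > 1$ enters naturally to ensure $p(t) > 1$ throughout $[0,t^\ast]$, so that the functional $\int f^{p(t)-1}Lf\,\d\mu_d$ and the substitution $g = f^{p(t)/2}$ are well-defined and so that $p'(t)/(2(p(t)-1))$ is a positive constant one can divide by. The final reduction from the smooth, strictly positive class back to general $u \in L^q(\mu_d)$ is routine by monotone convergence.
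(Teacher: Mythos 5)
The paper does not prove this lemma: it simply cites Nelson~\cite{n66} and moves on. Your argument is a correct reconstruction of Gross's celebrated proof of Nelson's theorem via the Gaussian logarithmic Sobolev inequality, which is a different (and arguably more conceptual) route than Nelson's original combinatorial argument. The key bookkeeping is right: with $p(t)=1+(q-1)e^{2t}$ one has $p'(t)=2(p(t)-1)$, and after the integration-by-parts identity $\int g\,Lf\,\d\mu_d=\int\nabla g\cdot\nabla f\,\d\mu_d$ and the substitution $g=f^{p(t)/2}$, the condition $\frac{\d}{\d t}\norm{U(t)u}_{L^{p(t)}(\mu_d)}\le 0$ reduces exactly to $\int g^2\log\frac{g^2}{\norm{g}_{L^2(\mu_d)}^2}\,\d\mu_d\le 2\int|\nabla g|^2\,\d\mu_d$, i.e.\ Gaussian log-Sobolev with the sharp constant. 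The extension to $t\ge t^\ast$ via $L^{p(t)}(\mu_d)\hookrightarrow L^p(\mu_d)$ (valid because $\mu_d$ is a probability measure) is the standard closing step, and both of your proposed proofs of the log-Sobolev inequality (two-point Bonami--Beckner $+$ tensorization $+$ CLT, or a Bakry--\'Emery semigroup interpolation using $\nabla U(s)=e^{-s}U(s)\nabla$) are correct and complete the argument; your remark on tensorization is what makes the bound dimension-free, which is precisely what the paper needs. One small point worth making explicit when writing this up carefully: the regularization to smooth, strictly positive $u$ should be arranged so that both $N(t)$ is differentiable and $g=f^{p(t)/2}$ lies in the Sobolev space on which the log-Sobolev inequality is being invoked; this is the ``routine'' step you allude to and it does require a monotone-convergence or density argument at the end.
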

Notice that the constant of the inequality in (\ref{hyper}) (i.e.\ $1$) and the range of $p,q,t$ do
not depend on the dimension $d$. As a consequence, the following holds.
\begin{Lemma}
 Let $F\in\H_k$. Then, for $p\ge2$, we have 
 \begin{equation}
  \norm{F}_{L^p(\R^d,\mu_d)} \le (p-1)^\frac k2 \norm{F}_{L^2(\R^d,\mu_d)}.
 \end{equation}
\end{Lemma}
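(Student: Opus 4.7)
The plan is to leverage the fact, recalled immediately above the statement, that every element of $\H_k$ is an eigenvector of the Ornstein-Uhlenbeck operator $L$ with eigenvalue $k$. Consequently the Ornstein-Uhlenbeck semigroup acts on such an element by a scalar, namely $U(t)F = e^{-tk}F$ for all $t\ge 0$, which reduces the hypercontractivity inequality to a pointwise identity that can be rearranged to give the desired $L^p$ bound.

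Concretely, I would first observe that since $F\in\H_k$, we have $U(t)F = e^{-tk}F$ for every $t\ge 0$ (this follows for homogeneous Wiener chaoses of order $k$ from direct computation using $L H_k = k H_k$, and extends to the closure by the spectral theorem, since $L$ is self-adjoint on $L^2(\R^d,\mu_d)$). Next, I would apply the hypercontractivity bound \eqref{hyper} of the previous lemma with $q=2$, which is valid provided $t\ge \frac12 \log(p-1)$. Choosing $t_\ast := \tfrac12\log(p-1)$ (which is nonnegative since $p\ge 2$) and combining with the eigenvector identity gives
\begin{equation*}
(p-1)^{-k/2}\norm{F}_{L^p(\R^d,\mu_d)} = e^{-t_\ast k}\norm{F}_{L^p(\R^d,\mu_d)} = \norm{U(t_\ast)F}_{L^p(\R^d,\mu_d)} \le \norm{F}_{L^2(\R^d,\mu_d)}.
\end{equation*}
Multiplying both sides by $(p-1)^{k/2}$ yields the claim.

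There is essentially no obstacle here beyond bookkeeping: the only mildly delicate point is the extension of the eigenvalue identity from homogeneous chaoses (where it is immediate from $LH_{k_1}(x_1)\cdots H_{k_d}(x_d) = k\, H_{k_1}(x_1)\cdots H_{k_d}(x_d)$, with $k=k_1+\cdots+k_d$) to arbitrary $F\in\H_k$, but this is immediate by closure of $\H_k$ in $L^2$ together with continuity of $U(t)$ on $L^2$. A secondary routine check is that $F\in L^p(\R^d,\mu_d)$ in the first place, but since $F$ can be approximated in $L^2$ by polynomials in Gaussian variables—which lie in every $L^p$—this is harmless; indeed the stated inequality itself, applied to a polynomial approximant and passed to the limit via Fatou, both justifies the finiteness and yields the bound simultaneously.
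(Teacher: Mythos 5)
Your argument is exactly the paper's: apply the hypercontractive bound \eqref{hyper} with $q=2$ and $t=\frac12\log(p-1)$, then use that $F\in\H_k$ is an eigenvector of $U(t)$ with eigenvalue $e^{-kt}$ to peel off the factor $(p-1)^{-k/2}$. The extra remarks about extending the eigenvalue identity to the closure and checking $F\in L^p$ are correct but just make explicit what the paper leaves implicit.
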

This estimate follows simply by applying (\ref{hyper}) to $F$, setting $q=2$, $t= \frac12\log(p-1)$, and
recalling that $F$ is an eigenvector of $U(t)$ with eigenvalue $e^{-kt}$. As a further consequence,
we obtain the following lemma.
\begin{Lemma}\label{LEM:hyp3}
Fix $k \in \mathbb{N}$ and $c(n_1, \dots, n_k) \in \C$.
Given 	
 $d \in \mathbb{N}$, 
 let $\{ g_n\}_{n = 1}^d$ be 
 a sequence of  independent standard complex-valued Gaussian random variables.
Define $S_k(\omega)$ by 
\begin{equation*}
 S_k(\omega) = \sum_{\Gamma(k, d)} c(n_1, \dots, n_k) g_{n_1} (\omega)\cdots g_{n_k}(\omega),
\end{equation*}

\noindent
where $\Gamma(k, d)$ is defined by
\[ \Gamma(k, d) = \big\{ (n_1, \dots, n_k) \in \{1, \dots, d\}^k \big\}.\]
Then, for $p \geq 2$, we have
\begin{equation}
 \|S_k \|_{L^p(\Omega)} \leq \sqrt{k+1}(p-1)^\frac{k}{2}\|S_k\|_{L^2(\Omega)}.
\label{hyp4}
 \end{equation}
\end{Lemma}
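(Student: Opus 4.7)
The plan is to reduce to the previous hypercontractivity estimate on a single chaos $\H_k$ by decomposing $S_k$ into its Wiener chaos components and bounding each component separately.

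First I would handle the complex Gaussians: write each $g_n = (g_n^R + i g_n^I)/\sqrt 2$, where $\{g_n^R, g_n^I\}_{n=1}^d$ is a family of $2d$ independent standard real Gaussians. After this reduction, $S_k$ becomes a polynomial of degree (at most) $k$ in the $2d$-dimensional Gaussian vector, i.e.\ an element of $L^2(\R^{2d},\mu_{2d})$, and we may apply the abstract Ito--Wiener theory set up above with $d$ replaced by $2d$.

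Next, I would decompose $S_k$ along the Ito--Wiener orthogonal decomposition
$$L^2(\R^{2d},\mu_{2d}) = \bigoplus_{j=0}^\infty \H_j.$$
Since each monomial $g_{n_1}\cdots g_{n_k}$ is a polynomial of degree $k$ in the real Gaussians, it only charges $\H_0,\H_1,\dots,\H_k$ (with even/odd parity preserved). Writing $S_k = \sum_{j=0}^k S_{k,j}$ with $S_{k,j}\in\H_j$, the previous lemma (applied in dimension $2d$, with a constant independent of the dimension) yields, for $p\ge 2$,
$$\norm{S_{k,j}}_{L^p(\Omega)} \le (p-1)^{j/2}\norm{S_{k,j}}_{L^2(\Omega)} \le (p-1)^{k/2}\norm{S_{k,j}}_{L^2(\Omega)},$$
using $p-1\ge 1$ to enlarge the exponent from $j$ to $k$.

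Finally I would put the pieces together with the triangle inequality followed by Cauchy--Schwarz on the $k+1$ terms:
$$\norm{S_k}_{L^p(\Omega)} \le \sum_{j=0}^k \norm{S_{k,j}}_{L^p(\Omega)} \le (p-1)^{k/2}\sum_{j=0}^k \norm{S_{k,j}}_{L^2(\Omega)} \le (p-1)^{k/2}\sqrt{k+1}\,\Bigl(\sum_{j=0}^k\norm{S_{k,j}}_{L^2(\Omega)}^2\Bigr)^{1/2}.$$
By $L^2$-orthogonality of the chaoses, $\sum_{j=0}^k\norm{S_{k,j}}_{L^2(\Omega)}^2 = \norm{S_k}_{L^2(\Omega)}^2$, which gives \eqref{hyp4}. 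There is no serious obstacle here; the only point that needs care is the observation that a product of $k$ (real or complex) Gaussians has nonzero projections in at most $k+1$ distinct chaoses, which is precisely the source of the $\sqrt{k+1}$ prefactor.
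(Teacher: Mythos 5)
The paper does not actually give a proof of this lemma; it simply states it as ``a further consequence'' of the preceding single-chaos estimate $\norm{F}_{L^p}\le(p-1)^{k/2}\norm{F}_{L^2}$ for $F\in\H_k$. Your argument is the standard (and correct) way to deduce it: pass to real Gaussians, decompose $S_k$ into at most $k+1$ chaos components $S_{k,j}\in\H_j$ ($0\le j\le k$), apply the dimension-free single-chaos bound (enlarging $(p-1)^{j/2}$ to $(p-1)^{k/2}$ using $p\ge 2$), and finish with the triangle inequality, Cauchy--Schwarz over the $k+1$ indices, and $L^2$-orthogonality of the chaoses. All steps are sound, and the origin of the $\sqrt{k+1}$ factor is exactly as you describe. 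One small remark: the parity observation you make (only chaoses of the same parity as $k$ appear) is true but not needed for the stated bound; one could also observe that a holomorphic polynomial in standard complex Gaussians actually lives entirely in $\H_k$, which would make $\sqrt{k+1}$ superfluous for the lemma as literally stated, but the form with $\sqrt{k+1}$ is the one the paper needs in its applications (where the Fourier coefficients of a real field satisfy $\hat\psi(-\xi)=\overline{\hat\psi(\xi)}$, so conjugates effectively do enter), and your proof covers that more general situation as well.
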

\begin{proof}[Proof of Proposition \ref{psi}]
\hfill
\begin{renumerate}
 \item Consider the operator $A_N(t) := \frac{\sin(t|\nabla|)}{|\nabla|}\chi_N(\nabla)$.
 We have that $A_N(t) \phi = \phi \ast a_N(t,\cdot)$, where $a_N(t,x) = \int_{B_N} \frac{\sin(t|\xi|)}{|\xi|} 
 e^{2\pi i \xi \cdot x}\d \xi$. Moreover, 
 $$a_N \in H^{2m}([-T,T]\times\R^2)\text{ for every } m\in \N,$$ 
 since
 \begin{align*}
 \norm{a_N}_{\dot H^{2m}}^2 &\sim \norm{\partial_t^{2m} a_N}_{L^2}^2 + \norm{\Delta^m a_N}_{L^2}^2 \\
    & = 2\int_{-T}^T \int_{B_N} \sin(t|\xi|)^2 |\xi|^{4m-2} \d\xi \\
    & < +\infty.
 \end{align*}
 Therefore, for any ball $B\subseteq \R^2$,
 \begin{align*}
  \E\norm{\psi_N}_{\dot H^{2m}([-T,T] \times B)}^2\sim \E\norm{\partial_t^{2m} \psi_N}_{L^2([-T,T] \times B)}^2 + \E\norm{\Delta^m \psi_N}_{L^2([-T,T] \times B)}^2, \\  
 \end{align*}
 and
 \begin{align*}
  &\E \norm{\partial_t^{2m} \psi_N}_{L^2([-T,T] \times B)}^2 \\
  & = 
  \int_{[-T,T] \times B} \E \left|\int_{0}^{t}\int_{\R^2} \partial_t^{2m}a_N(t-t',x-x') \d W(t',x')\right|^2 \d t \d x\\
  & \le \int_{[-T,T] \times B} \norm{a_N}_{\dot H^{2m}([-T,T]\times \R^2)}^2 \d t\d x \\
  & = 2T|B| \norm{a_N}_{\dot H^{2m}([-T,T]\times \R^2)}^2 \\
  &< +\infty, \\
 \end{align*}
and similiarly, 
\begin{align*}
  &\E \norm{\Delta^{m} \psi_N}_{L^2([-T,T] \times B)}^2 \\
  & = 
  \int_{[-T,T] \times B} \E \left|\int_{0}^{t}\int_{\R^2} \Delta^{m}a_N(t-t',x-x') \d W(t',x')\right|^2 \d t \d x\\
  & \le \int_{[-T,T] \times B} \norm{a_N}_{\dot H^{2m}([-T,T]\times \R^2)}^2 \d t\d x \\
  & = 2T|B| \norm{a_N}_{\dot H^{2m}([-T,T]\times \R^2)}^2 \\
  &< +\infty, \\
 \end{align*}
so $\E\norm{\psi_N}_{\dot H^{2m}([-T,T] \times B)}^2 < +\infty$ for every $m$ a.s., therefore $\psi_N \in C^\infty_{t,x}$ a.s..
 \item We have that, for $s<t$, by Plancherel,\\
 \resizebox{\linewidth}{!}{
 \begin{minipage}{\linewidth}
 \begin{equation} \label{covariance}
   \begin{aligned}
  &\E[\psi_N(s,x)\psi_N(t,y)] \\
  &= \E \left[\int_0^s\int_{\R^2} a_N(s-t',x-x') \d W(t',x') \int_0^t\int_{\R^2} a_N(t-t',y-x') \d W(t',x')\right]\\
  &= \int_0^s\int_{\R^2} a_N(s-t',x-x')a_N(t-t',y-x') \d t' \d x'\\
  &= \int_0^s\int_{B_N} \frac{\sin((s-t')|\xi|)\sin((t-t')|\xi|)}{|\xi|^2} e^{-2\pi i \xi\cdot (x-y)} \d \xi \d t'.
 \end{aligned}
 \end{equation}
 \newline
 \end{minipage}
 }
Define $\gamma(t,\xi)$ by 
\begin{equation}
\gamma(t,\xi) := \int_0^t \frac{\sin((t-t')|\xi|)^2}{|\xi|^2} \d t'. 
\end{equation}
By applying the Bessel potentials $\jap{\nabla_x}^{-\epsilon}$ and $\jap{\nabla_y}^{-\epsilon}$ of order 
$\epsilon$ and setting $t=s$, $x=y$, we get 
\begin{align*}
 \E\left[\left|{\jap\nabla}^{-\epsilon} \psi_N(t,x)\right|^2\right] &= 
 \int_{|\xi|<N} {\jap\xi}^{-2\epsilon}\gamma(t,\xi) \d \xi \\
 &\lesssim t^3 + t\int_{|\xi|>1} \frac{1}{{\jap\xi}^{2+2\epsilon}} \\
 &\lesssim t^3 + t.
\end{align*}
for any $\epsilon >0$, $x\in \R^2$, and uniformly in $N$. In particular, by hypercontractivity (Lemma \ref{LEM:hyp3}),
we have that
\begin{equation*}
 \E\left[\left|{\jap\nabla}^{-\epsilon} \psi_N(t,x)\right|^p\right] \lesssim_{p,\epsilon,t} 1,
\end{equation*}
and thus, since $\rho\in C^\infty_c$, 
\begin{equation*}
 \E\left[\norm{\rho(\cdot)\psi_N(t,\cdot)}_{W^{-\epsilon,p}}^p\right] =  \E\left[\norm{{\jap\nabla}^{-\epsilon}\rho(\cdot)\psi_N(t,\cdot)}_{L^p}^p\right] < + \infty
\end{equation*}
for any $\epsilon>0$, $t>0$, and $p\ge1$, uniformly in $N\in \N$.\\
By the properties of Wick products \cite[Theorem I.22]{s74}, we have that
\begin{align*}
 \E\left[\wick{\psi_N^l(t,x)}\wick{\psi_N^l(t,y)}\right]
 & \sim \left\{\E\left[\psi_N(t,x)\psi_N(t,y)\right] \right\}^l \\
 &= \int_{|\xi_1|,\dotsc,|\xi_l|\le N} \prod_{j=1}^l \gamma(t,\xi_j) e^{-2\pi i \xi_j \cdot (x-y)} \d\xi_j \\
 & = \int_{|\xi_1|,\dotsc,|\xi_l|\le N} e^{-2\pi i \left(\sum_{1}^l\xi_j\right) \cdot (x-y)} \prod_{j=1}^l \gamma(t,\xi_j) \d\xi_j.
\end{align*}
Therefore, proceeding as before,
\begin{align*}
 \E\left[\left|{\jap\nabla}^{-\epsilon}\wick{\psi_N^l(t,\cdot)}(x)\right|^2\right] &= \int_{|\xi_1|,\dotsc,|\xi_l|\le N} \jap{\xi_1 + \dotsb + \xi_l}^{-2\epsilon} \prod_{j=1}^l \gamma(t,\xi_j) \d\xi_j \\
& \lesssim_t \int_{\xi_1,\dotsc,\xi_l} \jap{\xi_1 + \dotsb + \xi_l}^{-2\epsilon} \prod_{j=1}^l \jap{\xi_j}^{-2} \d\xi_j < \infty.
\end{align*}
for any $\epsilon >0$, $t>0$, uniformly in $x \in \R^2$ and $N$.
Hence we have 
\begin{equation*}
\left[\norm{\rho(\cdot)\wick{\psi_N(t,\cdot)^l}}_{H^{-\epsilon}}^2\right] \lesssim_{\rho,t} 1,
\end{equation*}
and by hypercontractivity
\begin{equation*}
\E\left[\norm{\rho(\cdot)\wick{\psi_N(t,\cdot)^l}}_{W^{-\epsilon,p}}^p\right]  \lesssim_{p,\rho,t} 1.
\end{equation*}
Integrating in time, we have that 
\begin{equation*}
\E\left[\norm{\rho(x)\wick{\psi_N(t,x)^l}}_{L^2_tH_x^{-\epsilon}([-T,T]\times \R^2)}^2\right] \lesssim_{\rho,T} 1,
\end{equation*}
and by hypercontractivity,
\begin{equation}\label{stlpwn}
\E\left[\norm{\rho(x)\wick{\psi_N(t,x)^l}}_{L^p_tW_x^{-\epsilon,p}([-T,T]\times \R^2)}^p\right] \lesssim_{p,\rho,T} 1.
\end{equation}
Moreover, since $\rho(\cdot)\wick{\psi_N^l}$ has compact support, $T< +\infty$, if
$q,r,s \le p$, it follows that 
\begin{equation*}
\norm{\norm{\rho(x)\wick{\psi_N(t,x)^l}}_{L^r_tW_x^{-\epsilon,s}([-T,T]\times \R^2)}}_{L^q(\Omega)} \lesssim_{p,\rho,T} 1.
\end{equation*}
\item Following (ii), we have that for $N\le M$:
\begin{align*}
&\E[\psi_N(s,x)\psi_M(t,y)] \\
&= \int_0^s\int_{B_N} \frac{\sin((s-t')|\xi|)\sin((t-t')|\xi|)}{|\xi|^2} e^{-2\pi i \xi\cdot (x-y)} \d \xi \d t'\\
&= \E[\psi_N(s,x)\psi_N(t,y)].
\end{align*}
Therefore, using the properties of Wick products \cite[Theorem I.22]{s74} again,
\begin{align*}
&\E\left|\wick{\psi_M^l(t,x)}-\wick{\psi_N^l(t,x)}\right|^2 \\
&= \E\left|\wick{\psi_M^l(t,x)}\right|^2 - 2 \E\left[\wick{\psi_M^l(t,x)}\wick{\psi_N^l(t,x)}\right] + \E\left|\wick{\psi_N^l(t,x)}\right|^2\\
&\sim \left\{\E\left[\psi_M(t,x)^2\right]\right\}^l- 2 \left\{\E\left[\psi_M(t,x)\psi_N(t,x)\right]\right\}^l + \left\{\E\left[\psi_N(t,x)^2\right]\right\}^l \\
&= \left\{\E\left[\psi_M(t,x)^2\right]\right\}^l - \left\{\E\left[\psi_N(t,x)^2\right]\right\}^l \\
& = \int_{N\le |\xi_1|,\dotsc,|\xi_l|\le M} e^{-2\pi i \left(\sum_{1}^l\xi_j\right) \cdot (x-y)} \prod_{j=1}^l \gamma(t,\xi_j) \d\xi_j.
\end{align*}
Similarly, we have
\begin{align*}
&\E\left|{\jap\nabla}^{-\epsilon} \left(\wick{\psi_M^l}-\wick{\psi_N^l}\right)(t,x)\right|^2\\
&= \int_{N\le |\xi_1|,\dotsc,|\xi_l|\le M} \jap{\xi_1 + \dotsb + \xi_l}^{-2\epsilon} \prod_{j=1}^l \gamma(t,\xi_j) \d\xi_j \\
& \lesssim_t \int_{|\xi_1|,\dotsc,|\xi_l| \ge N} \jap{\xi_1 + \dotsb + \xi_l}^{-2\epsilon} \prod_{j=1}^l \jap{\xi_j}^{-2} \d\xi_j \lesssim N^{-2\theta}.
\end{align*}
for every $0< \theta < \epsilon$. Integrating in space and time, we obtain 
\begin{equation*}
\E\left[\norm{\rho(x)\wick{\psi_M(t,x)^l} - \rho(x)\wick{\psi_N(t,x)^l}}_{L^2_tH_x^{-\epsilon}}^2\right]  \lesssim_{p,\rho,t} N^{-2 \theta},
\end{equation*}
from which, by hypercontractivity
\begin{equation*}
\E\left[\norm{\rho(x)\wick{\psi_M(t,x)^l} - \rho(x)\wick{\psi_N(t,x)^l}}_{W^{-\epsilon,p}}^p\right]  \lesssim_{p,\rho,t} N^{-p \theta},
\end{equation*}
and, arguing as in (ii), 
\begin{equation*}
\norm{\norm{\rho(x)\wick{\psi_M(t,x)^l}-\rho(x)\wick{\psi_N(t,x)^l}}_{L^p_tW_x^{-\epsilon,q}([-T,T]\times \R^2)}}_{L^q(\Omega)}
\lesssim_{\max(p,q,r),\rho,T} N^{-\theta}.
\end{equation*}
\item Using the formula (\ref{covariance}), we have that 
\begin{align*}
&\E\left[\wick{\psi_N(s,x)^l} \wick{\psi_N(t,y)^l}\right] \\
& \sim \E[\psi_N(s,x)\psi_N(t,y)]^l\\ 
&= \left(\int_0^{s\wedge t}\int_{B_N} \frac{\sin((s-t')|\xi|)\sin((t-t')|\xi|)}{|\xi|^2} e^{-2\pi i \xi\cdot (x-y)} \d \xi \d t'\right)^l.
\end{align*}
Therefore, calling 
$$\gamma_N(s,t;\xi):= \int_0^{s\wedge t} \frac{\sin((s-t')|\xi|)\sin((t-t')|\xi|)}{|\xi|^2} \d t' \lesssim_{s,t} \jap{\xi}^{-2}$$
and proceeding as in (iii), we have that
\begin{align*}
&\E\left[\left(\jap{\nabla}^{-\epsilon}\wick{\psi_N(s,\cdot)^l}(x)\right)\left( \jap{\nabla}^{-\epsilon} \wick{\psi_N(t,\cdot)^l}(x)\right)\right] \\
& \sim \int_{|\xi_1|,\dotsc,|\xi_l|\le N} \jap{\xi_1+\dotsb+\xi_l}^{-2\epsilon} \prod_{j=1}^l \gamma(s,t;\xi_j) \d\xi_j.
\end{align*}
Therefore, 
\begin{align*}
&\E\left|\left(\jap{\nabla}^{-\epsilon}\wick{\psi_N(t+h,\cdot)^l}(x)\right) - \left( \jap{\nabla}^{-\epsilon} \wick{\psi_N(t,\cdot)^l}(x)\right)\right|^2 \\
& \begin{multlined} 
\sim\int_{|\xi_1|,\dotsc,|\xi_l|\le N} \jap{\xi_1+\dotsb+\xi_l}^{-2\epsilon} \\ \times  
\left(\prod_{j=1}^l\gamma(t+h,t+h;\xi_j) -2 \prod_{j=1}^l\gamma(t,t+h;\xi_j) + \prod_{j=1}^l\gamma(t,t;\xi_j)\right) \d\xi_j.
\end{multlined}.\\
\end{align*}
Interpolating between 
$$\gamma(t+h,t+h;\xi) - \gamma(t,t+h;\xi), \gamma(t,t+h;\xi) - \gamma(t,t;\xi) \lesssim_t |h|\jap{\xi}^{-1}$$
and 
$$\gamma(t+h,t+h;\xi) - \gamma(t,t+h;\xi), \gamma(t,t+h;\xi) - \gamma(t,t;\xi) \lesssim_t \jap{\xi}^{-2},$$
we get 
$$\gamma(t+h,t+h;\xi) - \gamma(t,t+h;\xi), \gamma(t,t+h;\xi) - \gamma(t,t;\xi) \lesssim_t |h|^\theta \jap{\xi}^{-2+\theta}$$
for every $0\le \theta \le 1$, so choosing $\theta < 2\epsilon$, by the discrete Liebnitz formula,
\begin{align*}
&\E\left|(\jap{\nabla}^{-\epsilon}\wick{\psi_N(t+h,\cdot)^l})(x) - \left( \jap{\nabla}^{-\epsilon} \wick{\psi_N(t,\cdot)^l}(x)\right)\right|^2 \\
& \lesssim_{t,l} \int_{\xi_1,\dotsc,\xi_l} \jap{\xi_1+\dotsb+\xi_l}^{-2\epsilon} |h|^\theta \jap{\xi_1}^{-2+\theta} \prod_{j=2}^l \jap{\xi_j}^{-2}
 \d\xi_j \lesssim_{t,l} |h|^{\theta}.
\end{align*}
Since this inequality passes to the limit, we have that
\begin{align*}
&\E\left|\left(\jap{\nabla}^{-\epsilon}\wick{\psi(t+h,\cdot)^l}(x)\right) - \left( \jap{\nabla}^{-\epsilon} \wick{\psi(t,\cdot)^l}(x)\right)\right|^2 \lesssim_t |h|^{\theta}
\end{align*} 
for a.e. $\!t$. Integrating and using hypercontractivity, we get 
\begin{align*}
&\E\left[\norm{\rho(\cdot)\left(\wick{\psi(t+h,\cdot)^l}- \wick{\psi(t,\cdot)^l}\right)}_{W^{-\epsilon,p}}^p\right] \lesssim_{t,p} |h|^{\frac{p\theta}{2}}.
\end{align*} 
Moreover, if $|t|\le T$, the implicit constant $C$ can be chosen as $C=C(T,p)$. Therefore, by Kolmogorov Continuity Theorem, for $|t|<T$, 
we have that $\rho\wick{\psi^l} \in C^\alpha W^{-\epsilon,p}$, for every 
$\alpha < \frac{p\theta}{2(p-1)}$, so $\rho\wick{\psi^l} \in C_tW^{-\epsilon,p}$ a.s..
\item In order to prove this, we just need to prove
\begin{equation*}
\E\left[\norm{I_N(\rho \psi)}_{L^p}^p\right] \le C(T,\rho,m)^p p^\frac p2 \log^\frac p 2 N ,
\end{equation*}
then (v) will follow from a straightforward application of Chebishev inequality. \\
Proceeding as in \eqref{covariance}, we have that 
\begin{align*}
&\phantom{=} \E[|I_N(\rho\psi)|^2(t,x)] \\
&= \E\Big[ \Big|\iint m_N(\xi + \eta) \hat\psi(\xi) \hat\rho(\eta) e^{i2\pi(\xi+\eta)\cdot x} \d \xi \d \eta\Big|^2\Big]\\
&= \iiint m_N(\xi + \eta_1)\overline{m_N(\xi + \eta_2)} \gamma(t, \xi) \hat\rho(\eta_1)\overline{\hat\rho(\eta_2)} e^{i2\pi(\eta_1-\eta_2)\cdot x} \d \xi \d \eta_1 \d \eta_2,
\end{align*}
which is the inverse Fourier transform of the function $F:\R^2 \times \R^2 \to \C$,
$$F(\eta_1,\eta_2) = \hat\rho(\eta_1) \overline{\hat \rho(\eta_2)} \int m_N(\xi + \eta_1)\overline{m_N(\xi + \eta_2)} \gamma(t, \xi) \d \xi,$$
restricted to the plane $\{(x,-x)\}$. We have that $\gamma(t,\xi) \lesssim_t \jap{\xi}^{-2}$, and 
that for $n \ge 1$, $\partial_\eta^n m_N(\xi + \eta) \lesssim_{m,n} \jap{\xi + \eta}^{-n}$, therefore calling 
$$\phi(\eta_1,\eta_2) := \int m_N(\xi + \eta_1)\overline{m_N(\xi + \eta_2)} \gamma(t, \xi) \d \xi,$$
we have that 
$$\norm{\nabla^n_{\eta_1,\eta_2}\phi}_{L^\infty} \lesssim_{n,m} 
\left\{\begin{array}{ll}
\log N & \text{if } n=0 \\
1 & \text{if } n\ge 1
\end{array}\right.,
 $$
hence $\norm{\phi}_{C^k} \lesssim_{t,k} \log N$. Since $\hat\rho(\eta_1) \overline{\hat \rho(\eta_2)}$ is a Schwartz function of $\eta_1,\eta_2$, this implies that 
\begin{equation}
\E[|I_N(\rho\psi)|^2(t,x)] \lesssim_{T,\rho,m} \frac{\log N}{\jap{x}^4}.
\end{equation}
By hypercontractivity, since $I_N(\rho\psi)$ is gaussian, one gets that 
\begin{equation}
\E[|I_N(\rho\psi)|^p(t,x)] \le C(T,\rho,m)^p p^\frac p2 \frac{\log^\frac p2 N}{\jap{x}^{4p}}, 
\end{equation}
hence, integrating, 
$$\E\left[\norm{I_N(\rho \psi)}_{L^p_{t,x}}^p\right] \le C(T,\rho,m)^p p^\frac p2 \log^\frac p 2 N. $$
\end{renumerate}
\end{proof}
\section{Local well-posedness for the localized equation}
Take a compactly supported $\rho \in C^\infty_c(\R^2)$, and consider the equation 
\begin{equation}\label{classicalLSNLW}
\begin{gathered}
v_{tt}  - \Delta v + v^3 + 3v^2 \rho\psi + 3v \rho \wick{\psi^2} + \rho \wick{\psi^3} = 0, \\
v(t_0,x) = u_0(x) \hspace{1 cm}
\partial_t v(t_0,x) = u_1(x).
\end{gathered}
\end{equation}
Notice that (at least formally) 
$$v^3 + 3v^2 \rho\psi + 3v \rho \wick{\psi^2} + \rho \wick{\psi^3} = \wick{(v+\psi)^3}$$
whenever $\rho =1$. \\
Consider as well the mild formulation of (\ref{classicalLSNLW})
\begin{multline}\label{LSNLW} \tag{LSNLW}
v(t) = \cos((t-t_0)|\nabla|) u_0 + \frac{\sin((t-t_0)|\nabla|)}{|\nabla|} u_1 + 
\int_{t_0}^t \frac{\sin((t-t')|\nabla|)}{|\nabla|} \\
\times \left(v^3 + \sum_{j=0}^2 \binom{3}{j} v^j\rho\wick{\psi^{3-j}}\right).
\end{multline}
The following local well-posedness result holds:
\begin{Proposition}[Local Well-Posedness] \label{LWP}
Let $1> s \ge \frac 23$, and let $(u_0,u_1) \in \H^s$. Than there exists a time 
$\tau = \tau(t_0,\omega, \norm{(u_0,u_1)}_{\H^s}) > 0$ a.s., nonincreasing in $\norm{(u_0,u_1)}_{\H^s}$, such that the equation \eqref{LSNLW} has a unique solution
in the space $C([t_0 - \tau, t_0 + \tau];H^s)$. Moreover, a.s.\ in $\omega$, we have that $\inf_{|t_0| < T} \tau(t_0, \omega, \norm{(u_0,u_1)}_{\H^s}) > 0$
for every $T < \infty$.
\end{Proposition}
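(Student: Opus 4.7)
The plan is to apply a Banach fixed point argument to the Duhamel map $\Phi$ given by the right-hand side of \eqref{LSNLW}, acting on a closed ball in $X_\tau := C([t_0-\tau, t_0+\tau]; H^s(\R^2))$. The energy estimate for the two-dimensional wave equation,
$$ \biggl\|\int_{t_0}^t \frac{\sin((t-t')|\nabla|)}{|\nabla|} F(t')\,\d t'\biggr\|_{H^s} \lesssim \int_{t_0}^t \|F(t')\|_{H^{s-1}}\,\d t', $$
together with the trivial bound $\|\cos((t-t_0)|\nabla|)u_0 + \tfrac{\sin((t-t_0)|\nabla|)}{|\nabla|}u_1\|_{H^s}\lesssim \|(u_0,u_1)\|_{\H^s}$, reduces matters to estimating the nonlinear forcing
$N(v) := v^3 + 3v^2\rho\psi + 3v\rho\wick{\psi^2} + \rho\wick{\psi^3}$
in $L^1_\tau H^{s-1}_x$, with matching Lipschitz bounds for $N(v_1)-N(v_2)$.

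I would handle the four pieces of $N(v)$ separately. For the deterministic cubic $v^3$, the 2d Sobolev embedding $H^s\hookrightarrow L^{2/(1-s)}$ (specializing to $L^6$ at $s=\frac{2}{3}$) gives $v^3\in L^2\hookrightarrow H^{s-1}$, hence $\|v^3\|_{L^1_\tau H^{s-1}}\lesssim \tau \|v\|_{C_\tau H^s}^3$, reflecting the subcriticality of cubic NLW on $\R^2$ below $H^1$. For the pure stochastic term $\rho\wick{\psi^3}$, I would apply Proposition~\ref{psi}(ii) with $0<\epsilon<1-s$ and $r$ large, and use the compact support of $\rho$ to embed $W^{-\epsilon,r}(\R^2)\hookrightarrow H^{s-1}$; H\"older in time then gives an almost surely finite bound of the form $\tau^{1-1/q}Z_\omega$.

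The main obstacle is the two mixed terms $v^2\rho\psi$ and $v\rho\wick{\psi^2}$, each a product of a positive-regularity factor with a rough Wick power in $W^{-\epsilon,r}$. I would dispatch them via Bony's paraproduct decomposition together with product estimates of the schematic form $\|fg\|_{H^{s-1}}\lesssim \|f\|_{H^{s_1}}\|g\|_{W^{-\epsilon,r_1}}$ valid for $s_1>\epsilon$, plus a fractional Leibniz rule applied to $v^j$ through the 2d Sobolev embedding. A careful bookkeeping of the Sobolev exponents across the low-high, high-low, and resonant paraproduct pieces pins down $s=\frac{2}{3}$ as the precise threshold at which all these estimates close simultaneously with a positive $\epsilon$ to spare for the roughness of the Wick factor; this is the technical origin of the lower bound in the statement.

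Collecting these contributions yields
$$ \|\Phi(v)\|_{X_\tau} \leq C\|(u_0,u_1)\|_{\H^s} + C\tau^{\delta}\bigl(1+\|v\|_{X_\tau}\bigr)^3 M_\omega, $$
with $M_\omega := 1 + \max_{0\le l\le 3}\|\rho\wick{\psi^l}\|_{L^q_T W^{-\epsilon,r}_x}$ almost surely finite by Proposition~\ref{psi}, together with a matching Lipschitz bound. Choosing $\tau>0$ small enough as a decreasing function of $\|(u_0,u_1)\|_{\H^s}$ and $M_\omega$ makes $\Phi$ a strict contraction on a ball of radius $2C\|(u_0,u_1)\|_{\H^s}$ in $X_\tau$, producing the unique local solution in $C([t_0-\tau,t_0+\tau]; H^s)$. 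Since $M_\omega$ depends only on the fixed horizon $T$ via Proposition~\ref{psi}, the resulting $\tau(t_0,\omega,R)$ is a.s.\ bounded below uniformly on $\{|t_0|\le T\}$, which gives the last assertion of the proposition.
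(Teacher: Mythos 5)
Your proposal is correct and follows essentially the same route as the paper: a Banach fixed point argument in $C([t_0-\tau,t_0+\tau];H^s)$, with the wave energy estimate reducing matters to bounding $v^3 + \sum_j \binom{3}{j} v^j\rho\wick{\psi^{3-j}}$ in $H^{s-1}$, where the cubic and pure-noise terms are treated exactly as you describe and the almost-sure finiteness together with the $L^2_t$ bound on $[-T-1,T+1]$ from Proposition~\ref{psi} gives the uniform lower bound on $\tau$ over $|t_0|<T$. For the mixed terms the paper avoids invoking Bony explicitly and simply bounds $\|v^j\rho\wick{\psi^{3-j}}\|_{H^{s-1}} \lesssim \|v\|_{W^{0+,6-}}^j\|\rho\wick{\psi^{3-j}}\|_{W^{0-,6+}}$, using $s-1<0$ and the compact support of $\rho\wick{\psi^{3-j}}$; this is the same positive-times-negative-regularity product mechanism your paraproduct decomposition encodes, so the arguments are not meaningfully distinct (and note that $s\ge\tfrac23$ is only the threshold for this Sobolev-embedding approach, not for the problem, as the paper's remark after the proposition observes that Strichartz estimates push it down to $s>\tfrac14$).
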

\begin{Remark}
For the sake of simplicity, in this work we will use just Sobolev embeddings to get the required estimates, obtaing the local well-posedness result
in the space $H^s$ for $s \ge \frac23$. Since the constraint coming from Section 5 is stronger than this one, it is enough for our purposes. However, improving the proof by making use of the Strichartz estimates for the wave equation, it is possible to relax the condition to
$s> \frac 14$. See \cite{gko18} for this analysis, which can be applied with very little modifications to this problem. 
\end{Remark}
\begin{proof}
This proposition will follow by a standard fixed point argument. Consider the map 
\begin{equation}
\begin{aligned}
\Gamma_{(u_0,u_1)} v &:= \cos((t-t_0)|\nabla|) u_0 + \frac{\sin((t-t_0)|\nabla|)}{|\nabla|} u_1 \\
&+ \int_{t_0}^t \frac{\sin((t-t')|\nabla|)}{|\nabla|} \left(v^3 + \sum_{j=0}^2 \binom{3}{j} v^j\rho\wick{\psi^{3-j}}\right).
\end{aligned}
\end{equation}
defined on functions $v\in C_t([t_0-\tau,t_0+\tau];H^s)$.
By the Sobolev embedding $H^s \to H^{\frac23} \to W^{0+,6-} \to L^6$, and using that $s-1 < 0$ and the fact that $\rho\wick{\psi^k}$ is compactly supported, we have that 
\begin{align*}
\norm{\Gamma_{(u_0,u_1)} v}_{H^s} 
&\le \norm{(u_0,u_1)}_{\H^s} +  2\tau\norm{v^3}_{L^{\infty}_tH^{s-1}_x} + \sum_{j=0}^2 \binom3j \norm{v^j\rho\wick{\psi^{3-j}}}_{L^{\infty}_tH^{s-1}_x} \\
& \lesssim \norm{(u_0,u_1)}_{\H^s} + 2\tau \norm{v}_{L^{\infty}_tL^6}^3 + 
\sum_{j=0}^2 \binom3j \norm{v}_{L^\infty_tW^{0+,6-}}^j \norm{\rho\wick{\psi^{3-j}}}_{L^1_tW^{0-,6+}_x}  \\
& \lesssim \norm{(u_0,u_1)}_{\H^s} +  2\tau\norm{v}_{L^{\infty}_tH^s_x}^3 + 
\sum_{j=0}^2 \binom3j \norm{v}_{L^\infty_tH^s_x}^j \norm{\rho\wick{\psi^{3-j}}}_{L^1_tW^{0-,6+}_x} \\
&\lesssim \norm{(u_0,u_1)}_{\H^s} + \tau \norm{v}_{C_tH^s}^3 + 
(1+\norm{v}_{C_tH^s}^2) \max_{1\le j \le 3} \norm{\rho\wick{\psi^j}}_{L^1_tW^{0-,6+}_x}.
\end{align*}
By Proposition \ref{psi}, $\norm{\rho\wick{\psi^j}}_{L^1_tW^{0-,6+}_x}$ is finite a.s. (locally in time), so it is
possibile to find $\tau \ll 1$ such that $\max_{1\le j \le 3} \norm{\rho\wick{\psi^j}}_{L^1_tW^{0-,6+}_x} \ll 1$. 
Therefore, for $\tau$ small enough, we have that if $\norm{v}_{C_tH^s} \le 2\norm{(u_0,u_1)}_{\H^s}$, 
then $\norm{\Gamma_{u_0,u_1}v}_{C_tH^s} \le 2 \norm{(u_0,u_1)}_{\H^s}$.\\
Proceeding similarly, we have that 
\begin{align*}
\norm{\Gamma_{u_0,u_1} v-\Gamma_{u_0,u_1} w} 
&\lesssim \tau \norm{v-w}_{C_tH^s}(\norm{v}^2_{C_tH^s} + \norm{w}^2_{C_tH^s}) \\
&+ 
\norm{v-w}_{C_t\H^s}(1+\norm{v}_{C_t\H^s} +\norm{w}_{C_t\H^s}) \max_{j=1,2} \norm{\rho \wick{\psi^j}}_{L^1_tW^{0-,6+}_x}.
\end{align*}
for the same reason, choosing $\tau \ll 1$ we have that $\Gamma_{(u_0,u_1)}$ is a contraption on $B_{2\norm{(u_0,u_1)}_{\H^s}} \subseteq C_t\H^s$.\\
In order to finish the proof of this proposition, we just need to show that 
$$\inf_{|t_0| < T} \tau(t_0, \omega, \norm{(u_0,u_1)}_{\H^s}) > 0$$
for every $T > 0$. Notice that, by the previous argument, in order to have that 
$\Gamma_{u_0,u_1}$ is a contraption, we just need that $\tau, \norm{\rho\wick{\psi^j}}_{L^1_t\H^{s-1}_x} < \delta$
for a certain fixed $\delta$. By Proposition \ref{psi}, we have that 
$\norm{\rho\wick{\psi^j}}_{L^2_tW^{0-,6+}_x([-T-1,T+1]\times \R^2)} < +\infty$ a.s.. Therefore, we have that (when $\tau < 1$),
\begin{equation*}
\norm{\rho\wick{\psi^j}}_{L^1_tW^{0-,6-}_x([t_0-\tau,t_0+\tau]\times \R^2)} \lesssim \tau^\frac 12
\norm{\rho\wick{\psi^j}}_{L^2_tW^{0-,6+}_x([-T-1,T+1]\times \R^2)},
\end{equation*}
therefore for fixed $T$, $\tau$ can be chosen independently from $t_0$, and we have \linebreak $\inf_{|t_0| < T} \tau(t_0, \omega, \norm{(u_0,u_1)}_{\H^s}) > 0$.
\end{proof}
\begin{Proposition}[Blow up condition]\label{blowup}
Let $T^*$ be the maximal time for which the solution to \eqref{LSNLW} exists in the interval $[0,T^*)$, in the sense that $(v,v_t) \in C([0,T^*);\H^s)$ and for every $\delta >0$, there is no function $(\tilde v, \tilde v_t)$ such that $(v,v_t) \in C([0,T^*+\delta);H^s)$ which solves \eqref{LSNLW}. Suppose that $T^* < +\infty$. Then we have 
\begin{equation} \label{eq:blowup}
\lim_{s\to T^*} \norm{(v(s),v_t(s))}_{\H^s} = + \infty.
\end{equation}
\end{Proposition}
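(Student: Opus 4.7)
The plan is to prove the contrapositive by a standard continuation argument based on Proposition \ref{LWP}. Assume $T^* < +\infty$ and that \eqref{eq:blowup} fails. Then there exist a finite constant $M$ and a sequence $s_n \nearrow T^*$ with
\[
\norm{(v(s_n), v_t(s_n))}_{\H^s} \le M
\]
for every $n$. The goal is to extend $v$ to an interval of the form $[0, T^*+\delta)$ for some $\delta>0$, contradicting the maximality of $T^*$.

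First I would invoke Proposition \ref{LWP} at the initial time $t_0 = s_n$ with data $(v(s_n), v_t(s_n))$. Because the local existence time $\tau(t_0,\omega, \cdot)$ is \emph{nonincreasing} in the size of the initial data, one has
\[
\tau\bigl(s_n,\omega, \norm{(v(s_n),v_t(s_n))}_{\H^s}\bigr) \ge \tau(s_n,\omega, M).
\]
By the last statement of Proposition \ref{LWP}, applied on the interval $|t_0| < T^*+1$ with the fixed radius $M$, the quantity
\[
\tau_0 := \inf_{|t_0| < T^*+1} \tau(t_0,\omega, M)
\]
is almost surely strictly positive. Hence for every $n$ large enough that $s_n \in (T^*-1, T^*)$, the equation \eqref{LSNLW} with initial time $s_n$ has a unique solution $\tilde v \in C([s_n - \tau_0, s_n + \tau_0]; H^s)$.

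Next I would choose $n$ so large that $T^* - s_n < \tau_0$, which is possible since $s_n \nearrow T^*$. Then $s_n + \tau_0 > T^*$. By the uniqueness part of Proposition \ref{LWP}, $\tilde v$ agrees with $v$ on the overlap $[s_n, T^*)$, so the concatenation
\[
\bar v(t) := \begin{cases} v(t) & t \in [0, s_n], \\ \tilde v(t) & t \in [s_n, s_n + \tau_0], \end{cases}
\]
is well-defined, lies in $C([0, s_n + \tau_0]; H^s)$, and solves \eqref{LSNLW}. Setting $\delta := s_n + \tau_0 - T^* > 0$ yields a solution on $[0, T^* + \delta)$, contradicting the maximality of $T^*$.

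The argument is essentially routine once Proposition \ref{LWP} is in hand; the only mildly delicate point is the \emph{uniform in $t_0$} lower bound $\tau_0 > 0$, which is precisely the content of the last sentence of Proposition \ref{LWP} and is needed so that the local existence time does not degenerate as $s_n \to T^*$. No further estimate on the stochastic objects is required, because the randomness has already been absorbed into the $\omega$-dependence of $\tau$.
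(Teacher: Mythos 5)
Your proof is correct and takes essentially the same route as the paper: both argue by continuation using Proposition \ref{LWP}, extract a uniform lower bound $\tau_0 > 0$ on the local existence time from the last sentence of that proposition, and glue the continuation onto the original solution via uniqueness to push past $T^*$. The one minor point where you are actually more careful than the paper is in formulating the negation of \eqref{eq:blowup} — you correctly negate ``$\lim = +\infty$'' as the existence of a sequence $s_n \nearrow T^*$ with bounded $\H^s$-norm, rather than assuming a finite limit exists and then invoking continuity to bound the supremum; this removes a small informality in the paper's argument while otherwise following the same strategy.
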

\begin{proof}
Suppose by contradiction that $\lim_{s\to T^*} \norm{v(s),v_t(s)}_{\H^s} < +\infty$. Then by continuity in time,
$$\sup_{s < T^*} \norm{v(s),v_t(s)}_{\H^s} = M < +\infty.$$
Let $\tau' = \inf_{|t_0|< T^*} \tau(t_0,\omega,M),$ which is positive by Proposition \ref{LWP}. Consider the equation \eqref{LSNLW} starting from time $t_0 = T^* - \frac{\tau'}{2}$, with initial data $(v(t_0),v_t(t_0))$. By Proposition \ref{LWP} again, this will have a unique solution 
$$(\bar v, \bar v_t) \in C([t_0 - \tau', t_0 + \tau'];\H^s) = C\Big(\Big[T^* - \frac32\tau', T^* + \frac12 \tau'\Big];\H^s\Big),$$
which by uniqueness will coincide with $v$ in the interval $[T^* - \frac32\tau', T^*)$.
Therefore, defining
\begin{equation*}
(\tilde v, \tilde v_t)(s) = \left\{
\begin{aligned}
(v,v_t)(s) && \text{ for } 0\le s<T^*, \\
(\bar v,\bar v_t)(s) && \text{ for } T^* \le s\le T^* + \tau', \\
\end{aligned}\right.
\end{equation*}
$(\tilde v, \tilde v_t)$ will satisfy \eqref{LSNLW} in the interval $[0,T^*+\tau']$, contradiction.
\end{proof}

\section{Global well-posedness for the localized equation}
In this section, we establish global well-posedness for the equation \eqref{LSNLW}. In particular, we will prove the following
\begin{Proposition} \label{lgwp}
 Let $s > \frac 45$. Then the solution to \eqref{SNLW} with $(u_0,u_1) \in \H^s$ can be extended a.s.\ to a global solution 
 $u: \R \times \T^2 \to \R$. \\
 More precisely, for every $T>0$, $\delta > 0$, there exists a set $\Omega_{T,\delta}$ such that
 \begin{itemize}
  \item $\P((\Omega_{T,\epsilon})^c) \le \delta$, \\
  \item For every $\omega \in \Omega_{T,\delta}$, there exists a unique solution $u:[-T, T] \times \R^2 \to \R$ to \eqref{SNLW} such that
  $v(0,x) = u_0(x), v_t(0,x) = u_1(x)$. Moreover, this solution satisfies the estimate 
  $\norm{v}_{L^\infty_t([-T,T];H^s_x)} \le C(T, s, \delta). $
 \end{itemize}
\end{Proposition}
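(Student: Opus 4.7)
The plan is to implement the I-method strategy outlined in the introduction. Let $v$ denote the local solution to \eqref{LSNLW} given by Proposition \ref{LWP}. Fix a parameter $N \ge 1$ and let $I = I_N$ be the smoothing multiplier from the introduction; the key fact is that $I$ is the identity at frequencies $|\xi| \le N$, so
$$\|(v,\partial_t v)\|_{\H^s}^2 \;\lesssim\; N^{2(1-s)}\bigl(1 + F(t)\bigr), \qquad F(t) := \tfrac{1}{2}\|I\partial_t v\|_{L^2}^2 + \tfrac{1}{2}\|\nabla Iv\|_{L^2}^2 + \tfrac{1}{4}\|Iv\|_{L^4}^4.$$
By the blow-up criterion in Proposition \ref{blowup}, it is enough to show that on a suitable event $\Omega_{T,\delta}$ of probability $\ge 1-\delta$, one can propagate an a priori bound on $F(t)$ across the interval $[-T,T]$.

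Differentiating $F$ along \eqref{LSNLW} and integrating by parts produces two families of terms. The \emph{commutator terms} measure the defect $I(v^3)-(Iv)^3$ and are handled by standard I-method harmonic analysis (Littlewood-Paley paraproduct decompositions, bilinear Sobolev/Bernstein estimates); for $s>\tfrac{4}{5}$ these contribute at most $N^{-\beta}(1+F)^{3/2}$ for some $\beta>0$. The \emph{forcing terms} are of the form $\int I\partial_t v \cdot I(v^j \rho\wick{\psi^{3-j}})\, dx$ for $j=0,1,2$. The cases $j=0,1$ are controlled using the $W^{-\epsilon, r}_{\loc}$ bounds on $\rho\wick{\psi^l}$ from Proposition \ref{psi}(ii). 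The delicate term is $j=2$, namely $-3\int I\partial_t v \,(Iv)^2\, I(\rho\psi)\,dx$, which after Hölder requires a sharp bound on $\|I(\rho\psi)\|_{L^p_{t,x}}$. This is exactly what Proposition \ref{psi}(v) provides: off a set of probability $C^p\lambda^{-p}$, one has $\|I(\rho\psi)\|_{L^p_{t,x}} \le \lambda p^{1/2} \log^{1/2} N$. The merely logarithmic dependence on $N$ is the crucial gain that keeps the scheme afloat.

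Combining these, one obtains a differential inequality for $F$ that, via Gronwall, yields control of $F$ on an interval of length $\tau_N$ depending like a negative power of $\log N$ on $N$ (and on the current size of $F$). To reach time $T$, I iterate: with a sequence $N_0 \ll N_1 \ll N_2 \ll \cdots$, run the energy argument on $[\sum_{j<k} \tau_{N_j},\, \sum_{j\le k} \tau_{N_j}]$ using the cutoff $I_{N_k}$, then transition to $I_{N_{k+1}}$, incurring a growth factor $(N_{k+1}/N_k)^{2(1-s)}$ in the new value of $F$. Balancing the three competing constraints---(a) the growth of $F$ from transitions must be compatible with the smallness required by the commutator bound, (b) the intervals must satisfy $\sum_k \tau_{N_k} \ge T$, and (c) the total exceptional probability must be $\le \delta$---is the heart of the argument and forces $s > \tfrac{4}{5}$. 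The set $\Omega_{T,\delta}$ is taken as the intersection over $k$ of the favorable events of Proposition \ref{psi}(v) for $(N_k,\lambda_k,p_k)$, with parameters tuned so that $\sum_k C^{p_k}\lambda_k^{-p_k} \le \delta$.

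The main obstacle is precisely this optimization: one must choose $\{N_k\}$ growing fast enough that $F$ does not overshoot the regime in which the commutator estimates dominate the forcing, yet slowly enough that the intervals $\tau_{N_k}$ (which shrink like inverse powers of $\log N_k$) sum to at least $T$. The novelty of the proof, as the author points out in the introduction, is that $N$ must be adjusted in a \emph{time-dependent} way rather than fixed once and for all; this is dictated by the unbounded growth of $F$ caused by the continuous stochastic forcing, a feature absent in the classical I-method for deterministic equations.
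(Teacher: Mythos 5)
Your high-level plan matches the paper's: I-method energy $F=E(I_Nv,I_N\partial_tv)$, differentiation into commutator and forcing terms, Gronwall, and iteration over a sequence $N_0\ll N_1\ll\cdots$ with varying cutoff parameter. But there is a concrete quantitative error in the step-length bookkeeping that would cause your iteration to fail.

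You claim the Gronwall argument controls $F$ on an interval of length $\tau_N$ which ``depends like a negative power of $\log N$ on $N$''. That is not what the paper achieves, and the difference is essential. In Lemma \ref{tau}, the key move is to choose the H\"older exponent $\eta=(\log N)^{-1}$ in the estimate \eqref{worstestimate}. Proposition \ref{psi}(v) then gives $\|I(\rho\psi)\|_{L^{\eta^{-1}}_{t,x}}\lesssim \Lambda\log N$ on the good event, and since $E^{c\eta}\le N^{c\alpha/\log N}=e^{c\alpha}$ whenever $E\le N^\alpha$, the Gronwall factor is $\exp\bigl(|t-t_0|\,C'\Lambda\log N\bigr)=N^{C'\Lambda|t-t_0|}$. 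Consequently the time to grow from $E(t_0)\le N^\beta$ to $E(t)=N^\alpha$ is bounded below by $\tau=(\alpha-\beta)/C'$, which is \emph{independent of $N$}. This independence is what lets the scheme reach any prescribed $T$ in $\lceil T/\tau\rceil$ steps. In your version, the transition constraint $N_{k+1}^{2(1-s)}N_k^{\alpha}+N_k^{2\alpha}\ll N_{k+1}^\beta$ forces $N_k$ to grow at least like $N_0^{c^k}$ (doubly exponentially), so $\log N_k\sim c^k$; if $\tau_{N_k}$ really shrank like a negative power of $\log N_k$, you would get $\sum_k\tau_{N_k}<\infty$, and for large $T$ the iteration would stall short of the target time. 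Your constraint ``(b) the intervals must satisfy $\sum_k\tau_{N_k}\ge T$'' is therefore not just a balancing act to be tuned — with the scaling you state it is not achievable; one has to produce the $N$-independent $\tau$ as in the paper.

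A smaller difference: you take $\Omega_{T,\delta}$ as the intersection of the favorable events along the subsequence $\{N_k\}$. The paper instead intersects over \emph{all} $N\ge\tilde N$: with $p=\log N$ and $\Lambda=C_{T,\rho,m}e^2$, Proposition \ref{psi}(v) gives $\P(\Omega_\Lambda(N)^c)\le N^{-2}$, summable in $N$, so $\P\bigl(\bigcap_{N\ge\tilde N}\Omega_\Lambda(N)\bigr)$ can be made $\ge 1-\delta/2$. This avoids having to re-tune $(\lambda_k,p_k)$ per step and decouples the probability budget from the choice of $\{N_k\}$. Finally, the threshold $s>\tfrac45$ enters cleanly as the condition $2(1-s)<\beta<\alpha<1-3(1-s)$, i.e.\ $5(1-s)<1$; it is not the outcome of a three-way optimization but a direct compatibility constraint between the commutator gain $N^{-(1-3(1-s))}$ and the loss $N^{2(1-s)}$ from re-initialising the energy at the new cutoff.
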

Let $m:\R^2\to\R$ be a smooth radial function, $0\le m \le 1$, such that 
\begin{equation}
m(\xi) =
\left\{\begin{aligned}
1 &&\text{ for } |\xi| \le 1,\\
\frac1{|\xi|^\epsilon} && \text{ for } |\xi| \ge 3.
\end{aligned}
\right.
\end{equation}
Let $m_N(\xi):= m(\frac\xi N)$. This way, $m_N$ will satisfy 
\begin{equation}
m_N(\xi) =
\left\{\begin{aligned}
1 &&\text{ for } |\xi| \le N,\\
{\Big(\frac N{|\xi|}\Big)}^\epsilon && \text{ for } |\xi| \ge 3N.
\end{aligned}
\right.
\end{equation}
Let $I_N$ the operator corresponding to the Fourier multiplier $m_N$, i.e.\ $\widehat{I_Nf}(\xi) = m_N(\xi)\widehat f(\xi)$. By the definition of the multiplier, for every $\sigma\in\R$, $1< p< +\infty$, $0\le\delta\le\epsilon$, $I_N$ will satisfy 
\begin{gather}
\norm{I_Nf}_{W^{\sigma+ \delta, p}} \lesssim N^\delta \norm{f}_{W^{\sigma,p}}, \label{I} \\
\norm{f}_{W^{\sigma,p}} \lesssim \norm{I_Nf}_{W^{\sigma+\epsilon,p}}. \label{inverseI}
\end{gather}
One can establish these estimates by showing the analogous ones for $I_1$, and then the $N$-dependence will follow from a simple scaling argument. \\
From \eqref{I},\eqref{inverseI}, one has that, for fixed $N$, $\norm{\cdot}_{\H^s}$ is equivalent to $\norm{I_N\cdot}_{\H^{s+\epsilon}}$. Therefore, by the blowup condition \eqref{eq:blowup}, in order to show existence of solutions up to time $T$, it is enough to show that 
\begin{equation} \sup_{|s|<T} \norm{(I_Nv,I_Nv_t)}_{\H^{s+\epsilon}} < +\infty. \label{eq:blowup2}\end{equation}
By taking $\epsilon = 1-s$, 
\begin{equation}
E(v,v_t) := \frac12 \int v_t^2 + \frac12\int v^2 + \frac12 \int |\nabla v|^2 + \frac14 \int v^4,
\end{equation}
we clearly have that 
$$E(I_Nv,I_Nv_t) \gtrsim \norm{(I_Nv,I_Nv_t)}_{\H^1}^2 \gtrsim \norm{v,v_t}_{\H^s}^2.$$
The goal of this section will henceforth beshowing finiteness of $E(I_Nv,I_Nv_t)$. In the following, we will abuse of notation and omit the subscript $N$ whenever it is not important in the analysis, writing $I$ instead of $I_N$. Similarly, we will write $E$ instead of $E(I_Nv,I_Nv_t)$, and $E(s)$ instead of $E(I_Nv(s),I_Nv_t(s))$.
\begin{Lemma}\label{timeDerivative}
\begin{align}
\phantom{=}& \frac \d{\d t} E(t) \notag \\
=&~-3\int  Iv_t(Iv)^2I(\rho\psi) \label{worst} \\
-&~3\int Iv_tIvI(\rho\wick{\psi^2}) - \int Iv_tI(\rho\wick{\psi^3}) \label{tame}\\
+&~\begin{multlined}
\int Iv_t\big[\big((Iv)^3-I(v^3)\big) + 3\big((Iv)^2I(\rho\psi) - I(v^2\rho\psi)\big) \\
+3\big((Iv)I(\rho\wick{\psi^2}) - I(v\rho\wick{\psi^2})\big)\big]
\end{multlined}\label{commutators}\\
+&\int Iv_tIv \label{harmless}.
\end{align}
\end{Lemma}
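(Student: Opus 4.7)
The plan is to differentiate the functional $E(Iv, Iv_t)$ directly, substitute the equation satisfied by $v$, and then regroup the resulting terms to expose the commutator structure.

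First, I will compute formally
\begin{equation*}
\frac{\d}{\d t}E(Iv, Iv_t) = \int Iv_t \cdot I v_{tt} + \int \nabla Iv \cdot \nabla Iv_t + \int Iv \cdot Iv_t + \int (Iv)^3 Iv_t,
\end{equation*}
using that $I$ is a Fourier multiplier (so it commutes with $\partial_t$). Integrating the gradient term by parts and using that $I$ commutes with $\Delta$, the first two terms combine as $\int Iv_t \cdot I(v_{tt}-\Delta v)$. The third and fourth summands then give the harmless term \eqref{harmless} and the $L^4$-contribution $\int (Iv)^3 Iv_t$.

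Next I will substitute $v_{tt}-\Delta v = -v^3 - 3v^2\rho\psi - 3v\rho\wick{\psi^2} - \rho\wick{\psi^3}$ from \eqref{classicalLSNLW}. This yields
\begin{equation*}
\int Iv_t\cdot I(v_{tt}-\Delta v) = -\int Iv_t\,I(v^3) - 3\int Iv_t\,I(v^2\rho\psi) - 3\int Iv_t\,I(v\rho\wick{\psi^2}) - \int Iv_t\,I(\rho\wick{\psi^3}).
\end{equation*}
The only remaining step is purely algebraic: for each nonlinear contribution, add and subtract its ``naive'' counterpart where $I$ is distributed onto each factor. Explicitly, write
\begin{equation*}
-\int Iv_t\,I(v^3) + \int (Iv)^3 Iv_t = \int Iv_t\bigl[(Iv)^3 - I(v^3)\bigr],
\end{equation*}
\begin{equation*}
-3\int Iv_t\,I(v^2\rho\psi) = -3\int Iv_t(Iv)^2 I(\rho\psi) + 3\int Iv_t\bigl[(Iv)^2 I(\rho\psi) - I(v^2\rho\psi)\bigr],
\end{equation*}
and analogously for the $\wick{\psi^2}$ term. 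Grouping the ``diagonal'' pieces produces the worst term \eqref{worst} and the two tame terms in \eqref{tame}, while all the residues assemble precisely into the commutator block \eqref{commutators}.

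The computation is essentially formal manipulation, so the only thing to justify is the differentiation under the integral sign and the integration by parts. Since Proposition \ref{LWP} produces a solution with $(v,v_t) \in C([t_0-\tau,t_0+\tau];\H^s)$ and $I=I_N$ is a bounded Fourier multiplier that maps $H^s$ continuously into $H^1$, the quantities $Iv$, $Iv_t$, $\nabla Iv$, $(Iv)^3$ all lie in $L^\infty_t L^2_x$ uniformly on compact time intervals, and the right-hand side $I(v_{tt}-\Delta v)$ lies in $L^1_t L^2_x$ by Proposition \ref{psi} together with Sobolev embedding applied as in the proof of Proposition \ref{LWP}. This regularity is amply sufficient to justify both the time differentiation and the spatial integration by parts, and no boundary contributions arise because all factors are suitably integrable on $\R^2$ (the $\rho\wick{\psi^l}$ factors are compactly supported and $v$ has enough decay coming from the cutoff-driven equation). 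Thus no real obstacle is expected; the lemma is a bookkeeping identity, and the substantive work is postponed to the subsequent estimates of the four groups of terms.
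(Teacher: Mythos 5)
Your proposal is correct and takes essentially the same route as the paper: differentiate $E(Iv,Iv_t)$, use that $I$ commutes with $\partial_t$ and $\Delta$ to reduce to $\int Iv_t\, I(v_{tt}-\Delta v)$ plus the $(Iv)^3$ and $Iv$ contributions, substitute the equation \eqref{classicalLSNLW}, and then add and subtract the ``distributed-$I$'' terms to produce the commutator block. The paper treats the rigor of differentiation under the integral sign equally briefly, deferring it to the estimates of the section, so your additional remarks on regularity are compatible with (and slightly more explicit than) the paper's own justification.
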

\begin{proof}
We will show this proposition by a formal computation using \eqref{classicalLSNLW}. This computation can be made rigorous a posteriori by using the estimates of this section. We omit this part of the argument.\\
By \eqref{classicalLSNLW}, 
\begin{align*}
\phantom{=}&\frac \d{\d t} E(t) \\
=& \int Iv_{t}\Big(Iv_{tt} - I\Delta v + (Iv)^3 + Iv\Big)\\
=& \int Iv_t\Big(- I\big(v^3 + 3v^2 \rho\psi + 3v \rho \wick{\psi^2} + \rho \wick{\psi^3}\big)+ (Iv)^3 +Iv\Big).
\end{align*}
The lemma follows by adding and subtracting the terms $3(Iv)^2 I(\rho\psi)$ and $3(Iv) I(\rho \wick{\psi^2}~)$.
\end{proof}
We will now proceed to estimate the various terms of the time derivative of $E(Iv,Iv_t)$, with the goal of applying a Gronwall argument. The terms \eqref{harmless},\eqref{tame} are relatively harmless. Estimating the commutator terms in \eqref{commutators} is the core of the I-method, and will take most of this section. 
However, from a technical point of view, the hardest term to estimate will be \eqref{worst}, which will also give the main contribution to the estimate on the growth of $E$. This term is also what makes the iteration of the I-method with varying $N$ necessary. 
\begin{Lemma}
\begin{equation}\label{harmlessestimate}
\eqref{harmless} \lesssim E(Iv,Iv_t).
\end{equation}
\end{Lemma}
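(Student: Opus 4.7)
The estimate is an immediate consequence of Cauchy--Schwarz (or equivalently AM--GM on the pointwise product). My plan is to bound the bilinear pairing $\int Iv_t \cdot Iv$ by the sum of the squared $L^2$ norms of the two factors, each with weight $\tfrac12$, and then recognise these as the first two terms in the definition of the energy functional $E$.

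Concretely, first I would apply the elementary inequality $ab \le \tfrac12 a^2 + \tfrac12 b^2$ pointwise to $a = Iv_t(t,x)$, $b = Iv(t,x)$, and integrate in $x$, giving
\begin{equation*}
\int Iv_t\, Iv \le \tfrac12 \int (Iv_t)^2 + \tfrac12 \int (Iv)^2.
\end{equation*}
Next, I would observe that by definition
\begin{equation*}
E(Iv,Iv_t) = \tfrac12 \int (Iv_t)^2 + \tfrac12 \int (Iv)^2 + \tfrac12 \int |\nabla Iv|^2 + \tfrac14 \int (Iv)^4,
\end{equation*}
and since the last two terms are nonnegative, the right-hand side of the Cauchy--Schwarz bound is dominated by $E(Iv,Iv_t)$.

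There is no real obstacle here: the only ingredient is the nonnegativity of the kinetic, gradient and quartic parts of $E$, which makes the mass-type bilinear term harmless. The lemma is included mainly to be fed into the Gronwall argument alongside the more delicate estimates of \eqref{tame}, \eqref{commutators}, and \eqref{worst}, which is why the authors label it "harmless".
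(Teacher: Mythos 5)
Your proof is correct and is essentially the same as the paper's: the paper uses H\"older (Cauchy--Schwarz) to get $\int Iv_t\,Iv \le \norm{Iv_t}_{L^2}\norm{Iv}_{L^2} \le E(Iv,Iv_t)$, while you use the pointwise AM--GM form of the same inequality. These are interchangeable one-line arguments and there is nothing further to add.
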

\begin{proof}
By H\"older, 
$$\eqref{harmless} \le \norm{Iv_t}_{L^2}\norm{Iv}_{L^2} \le E(Iv,Iv_t). $$
\end{proof}
\begin{Lemma} 
For every $\gamma > 0$,
\begin{equation} \label{tameestimate}
\eqref{tame} \lesssim N^\gamma \Big(E(Iv,Iv_t)^\frac34 \norm{\rho\wick{\psi^2}}_{W^{-\gamma,4}} + E(Iv,Iv_t)^\frac12 \norm{\rho\wick{\psi^3}}_{H^{-\gamma}} \Big).
\end{equation}
\end{Lemma}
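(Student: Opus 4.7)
The plan is to treat the two pieces of \eqref{tame} separately, using only H\"older's inequality together with the multiplier mapping property \eqref{I} (specialized to $\sigma = -\gamma$, $\delta = \gamma$), which gives $\|I_Nf\|_{L^p} \lesssim N^\gamma \|f\|_{W^{-\gamma,p}}$ for any $1 < p < \infty$ and $\gamma$ in the admissible range $0 \le \gamma \le \epsilon$. The two energy bounds I will repeatedly use are $\|Iv_t\|_{L^2} \lesssim E^{1/2}$ and $\|Iv\|_{L^4} \lesssim E^{1/4}$, both immediate from the definition of $E(Iv, Iv_t)$.

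For the term $\int Iv_t \, I(\rho\wick{\psi^3})$, Cauchy--Schwarz gives
$$\Big|\!\int Iv_t \, I(\rho\wick{\psi^3}) \Big| \le \|Iv_t\|_{L^2}\, \|I(\rho\wick{\psi^3})\|_{L^2}.$$
Applying \eqref{I} with $p=2$ to the second factor and the energy bound to the first yields exactly $N^\gamma E^{1/2} \|\rho\wick{\psi^3}\|_{H^{-\gamma}}$. For the other term, I will use H\"older with exponents $(2,4,4)$:
$$\Big|\!\int Iv_t \, Iv \, I(\rho\wick{\psi^2}) \Big| \le \|Iv_t\|_{L^2}\, \|Iv\|_{L^4}\, \|I(\rho\wick{\psi^2})\|_{L^4}.$$
The first two factors combine to $\lesssim E^{1/2} \cdot E^{1/4} = E^{3/4}$, and \eqref{I} with $p=4$ converts the last factor to $N^\gamma \|\rho\wick{\psi^2}\|_{W^{-\gamma,4}}$, producing the required bound. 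Summing the two estimates yields \eqref{tameestimate}.

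Honestly I do not expect any real obstacle here: the term is called ``tame'' for a reason, and the roles of $v$ and of the random forcing decouple cleanly after one application of H\"older. The only mild subtlety is the range of $\gamma$ allowed in \eqref{I}, but since in the subsequent Gronwall argument $\gamma$ will always be taken small (in particular $\gamma \le \epsilon = 1-s$), this is harmless and the statement ``for every $\gamma > 0$'' should be read in this effectively small-$\gamma$ regime. The factor $N^\gamma$ is the price paid for having only negative-regularity control on the stochastic objects, and together with the $L^p$ large-deviation bound of Proposition \ref{psi} it will be absorbed at the Gronwall stage.
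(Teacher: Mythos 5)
Your argument is correct and is essentially identical to the paper's proof: the paper also applies H\"older with exponents $(2,4,4)$ to the $\wick{\psi^2}$ term and Cauchy--Schwarz to the $\wick{\psi^3}$ term, then uses the multiplier bound \eqref{I} with $\delta=\gamma$ to gain the factor $N^\gamma$ in exchange for passing to negative regularity on the stochastic factors. Nothing to add.
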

\begin{proof}
By H\"older and \eqref{I},
\begin{align*}
&~-3\int Iv_tIvI(\rho\wick{\psi^2}) - \int Iv_tI(\rho\wick{\psi^3})\\
\lesssim&~ \norm{Iv_t}_{L^2}\norm{Iv}_{L^4}\norm{I(\rho\wick{\psi^2})}_{L^4} + \norm{v_t}_{L^2}\norm{I(\rho\wick{\psi^3})}_{L^2}\\
\lesssim&~N^\gamma \Big(E(Iv,Iv_t)^{\frac12+\frac14} \norm{\rho\wick{\psi^2}}_{W^{-\gamma,4}} + E(Iv,Iv_t)^\frac12 \norm{\rho\wick{\psi^3}}_{H^{-\gamma}} \Big).
\end{align*}
\end{proof}
\begin{Lemma}\label{commI}
Let $k\le 3$. Then 
\begin{equation}\label{eq:commI}
\norm{(Iv)^k - I(v^k)}_{L^2} \lesssim_s N^{-(1-k(1-s))} \norm{Iv}^k_{H^1}. 
\end{equation}
\end{Lemma}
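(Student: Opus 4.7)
For $k=1$, the bound is trivial since the left-hand side vanishes identically. For $k=2, 3$, I would use a Littlewood--Paley decomposition $v = \sum_M v_M$ (with $M$ ranging over dyadic scales) and expand
\[
(Iv)^k - I(v^k) = \sum_{M_1,\ldots,M_k}\bigl[(Iv_{M_1})\cdots(Iv_{M_k}) - I(v_{M_1}\cdots v_{M_k})\bigr].
\]
The first key observation is that each summand vanishes when $\max_i M_i \lesssim N$, since then $I$ acts as the identity on each factor and on the product. By symmetry I may assume $M_1 \ge \cdots \ge M_k$ and restrict the sum to $M_1 \gtrsim N$, splitting further into the high--low regime $M_1 \gg M_2$ and the high--high regime $M_1 \sim M_2$.

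The heart of my plan is a pointwise estimate on the multilinear symbol
\[
\sigma(\xi_1,\ldots,\xi_k) = m_N(\xi_1)\cdots m_N(\xi_k) - m_N(\xi_1 + \cdots + \xi_k).
\]
In the high--low regime, the mean value theorem together with the bounds $|\nabla m_N(\xi)| \lesssim m_N(\xi)/\jap{\xi}$ for $|\xi| \gtrsim N$ and $|\nabla m_N(\xi)| \lesssim N^{-1}$ for $|\xi| \lesssim N$ yields $|\sigma| \lesssim m_N(M_1) \cdot M_2/M_1$; this cancellation is what drives the whole estimate. In the high--high regime, I would use the trivial pointwise bound $|\sigma| \lesssim m_N(M_1)$. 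In either case, H\"older's inequality in physical space and Bernstein's inequality $\|f_M\|_{L^\infty} \lesssim M \|f_M\|_{L^2}$ (in two dimensions) reduce each dyadic contribution to a product of $L^2$ norms of the $v_{M_j}$, weighted by powers of the $M_j$, the factor $m_N(M_1)$, and the MVT gain.

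Finally, I would use $\|v_M\|_{L^2} \sim m_N(M)^{-1} \|Iv_M\|_{L^2}$ at frequencies $\gtrsim N$, together with $\|Iv_M\|_{L^2} \lesssim M^{-1}\|Iv_M\|_{H^1}$, to re-express everything in terms of $\|Iv_M\|_{H^1}$. The factor $m_N(M_1) = (N/M_1)^{1-s}$ from the symbol cancels exactly one of the $m_N(M_j)^{-1} = (M_j/N)^{1-s}$ losses coming from converting the highest-frequency factor, and the resulting dyadic sum concentrates near $M = N$ and produces the claimed decay $N^{-(1-k(1-s))}$ after a Cauchy--Schwarz summation of the remaining scales against $\|Iv\|_{H^1}^2 = \sum_M \|Iv_M\|_{H^1}^2$. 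The main obstacle I anticipate is the high--high regime $M_1 \sim \cdots \sim M_k \gtrsim N$, where the MVT gain is absent; there I would close the estimate by using Bernstein to put $k-1$ factors into $L^\infty$, and this is where the restriction $k \le 3$ enters, via the two-dimensional Sobolev embedding $H^{2/3} \hookrightarrow L^6$ which allows the cubic product to be controlled in $L^2$.
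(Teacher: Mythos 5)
Your proposal takes a genuinely different route from the paper. The paper's proof is much more elementary: it decomposes $v = v_{\lesssim N} + v_{\gtrsim N}$ once (with $v_{\lesssim N}$ supported in $|\xi| < N/3$), observes that $(I v_{\lesssim N})^k = v_{\lesssim N}^k = I(v_{\lesssim N}^k)$ because the Fourier support of $v_{\lesssim N}^k$ stays inside $|\xi| \le kN/3 \le N$ --- \emph{this} is the only place $k \le 3$ is used --- and then estimates every remaining term $\norm{(Iv_{\gtrsim N}) v_{\lesssim N}^l (Iv_{\gtrsim N})^{k-l-1}}_{L^2}$ and $\norm{I(v_{\gtrsim N} v_{\lesssim N}^l v_{\gtrsim N}^{k-l-1})}_{L^2}$ \emph{separately}, with no further cancellation at all. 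The decay $N^{-(1-k(1-s))}$ comes from a single Bernstein gain $\norm{Iv_{\gtrsim N}}_{L^2} \lesssim N^{-(1-\epsilon)} \norm{Iv}_{H^{1-\epsilon}}$ raised to a fractional power via interpolation of $L^p$ norms, combined with the 2D Sobolev embedding $H^{1-\epsilon} \hookrightarrow L^{2/\epsilon}$. No Littlewood--Paley decomposition, no mean-value-theorem estimate on the symbol $\sigma = m_N(\xi_1)\cdots m_N(\xi_k) - m_N(\xi_1 + \cdots + \xi_k)$. (The MVT symbol estimate is used in the paper, but only in the bilinear Lemma \ref{commII}, not here.) Your multilinear symbol approach should also close, and if anything looks like it could give a sharper $N^{-1}$-type rate, but at the cost of tracking several frequency regimes and justifying the multilinear multiplier bound.

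One concrete misattribution worth correcting: you write that the restriction $k \le 3$ enters through $H^{2/3} \hookrightarrow L^6$ in the high--high regime. That cannot be right --- in 2D one has $H^{1-1/k} \hookrightarrow L^{2k}$ for every $k$, and your Bernstein-in-$L^\infty$ step likewise works for any $k$. The role of $k \le 3$ in this lemma is precisely the exact cancellation of the pure low-frequency part (your ``first key observation''), which requires the output frequency of the $k$-fold low product to stay inside the region where $m_N \equiv 1$. The other genuine constraint, hidden in the exponent, is the arithmetic requirement $1 - k(1-s) > 0$: if that fails the stated bound carries a \emph{positive} power of $N$ and is useless, which is why the lemma is stated for $k \le 3$ together with the regime $s > \tfrac45$ of the main theorem. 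Finally, your claim that the MVT cancellation ``is what drives the whole estimate'' is misleading at the level of this lemma: the paper's proof shows the claimed rate is reachable with no cancellation beyond the trivial low-frequency one.
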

\begin{proof}
Let $v_{\lesssim N} = \int_{|\xi|< N/3} \hat v(\xi) e^{i\xi\cdot x}$, and let $v_{\gtrsim N} := v-v_{\lesssim N}$.\\
Since $\widehat {v_{\lesssim N}} (\xi) \neq 0$ only if $|\xi|<N/3$, by definition of $I$ we have that $I v_{\lesssim N} = v_{\lesssim N}$.
Similarly, $\widehat{v_{\lesssim N}^k} (\xi) \neq 0 $ only if $|\xi|< kN/3$, so $I\big(v_{\lesssim N}^k\big) = v_{\lesssim N}^k$.
Therefore, 
\begin{align*}
& (Iv)^k - I(v^k) \\
= &  \big(I(v_{\lesssim N} + v_{\gtrsim N})\big)^k - I\big((v_{\lesssim N} + v_{\gtrsim N})^k\big) \\
= & \big(v_{\lesssim N} + I(v_{\gtrsim N})\big)^k - I\big((v_{\lesssim N} + v_{\gtrsim N})^k\big)\\
= & v_{\lesssim N}^k - I\big(v_{\lesssim N}^k\big) + \sum_{l=0}^{k-1} \binom{k}{l}\left((Iv_{\gtrsim N})v_{\lesssim N}^l(Iv_{\gtrsim N})^{k-l-1} - I\big(v_{\gtrsim N}v_{\lesssim N}^lv_{\gtrsim N}^{k-l-1}\big)\right)\\
= & \sum_{l=0}^{k-1} \binom{k}{l}\left((Iv_{\gtrsim N})v_{\lesssim N}^l(Iv_{\gtrsim N})^{k-l-1} - I\big(v_{\gtrsim N}v_{\lesssim N}^lv_{\gtrsim N}^{k-l-1}\big)\right)
\end{align*}
Therefore, \eqref{eq:commI} follows if we prove that for every $l\le k-1$, 
\begin{equation}
\norm{(Iv_{\gtrsim N})v_{\lesssim N}^l(Iv_{\gtrsim N})^{k-l-1}}_{L^2}\lesssim N^{1-k(1-s)} \norm{Iv}_{H^1}^k
\end{equation}
and 
\begin{equation}
\norm{I\big(v_{\gtrsim N}v_{\lesssim N}^lv_{\gtrsim N}^{k-l-1}\big)}_{L^2}\lesssim N^{1-k(1-s)} \norm{Iv}_{H^1}^k.
\end{equation}
Let $\epsilon:= 1-s$. By Sobolev embeddings, we have that $\norm{f}_{L^{\frac2\epsilon}} \lesssim \norm{f}_{H^{1-\epsilon}}$. By H\"older,
we have $$\norm{f}_{L^{\left(\frac12 - \frac{(k-1)\epsilon}{2}\right)^{-1}}} \lesssim \norm{f}_{L^2}^{\frac{1-k\epsilon}{1-\epsilon}}\norm{f}_{L^\frac2\epsilon}^{\frac{(k-1)\epsilon}{1-\epsilon}}
\lesssim \norm{f}_{L^2}^{\frac{1-k\epsilon}{1-\epsilon}} \norm{f}_{H^{1-\epsilon}}^{\frac{(k-1)\epsilon}{1-\epsilon}}.$$ Therefore, again by H\"older,
we have:
\begin{align*}
 &~\norm{(Iv_{\gtrsim N})v_{\lesssim N}^l(Iv_{\gtrsim N})^{k-l-1}}_{L^2}\\
\lesssim &~\norm{(Iv_{\gtrsim N})}_{L^{\left(\frac12 - \frac{(k-1)\epsilon}{2}\right)^{-1}}} 
 \norm{Iv_{\gtrsim N}}_{L^{\frac2\epsilon}}^{l}\norm{Iv_{\gtrsim N}}_{L^{\frac2\epsilon}}^{k-l-1} \\
\lesssim &~\norm{Iv_{\gtrsim N}}_{L^2}^{\frac{1-k\epsilon}{1-\epsilon}}\norm{Iv_{\gtrsim N}}^{\frac{(k-1)\epsilon}{1-\epsilon}}_{H^{1-\epsilon}}\norm{Iv}_{H^{1-\epsilon}}^{k-1} \\
\lesssim &~N^{-(1-\epsilon)\frac{1-k\epsilon}{1-\epsilon}} \norm{Iv}_{H^{1-\epsilon}}^{\frac{1-k\epsilon}{1-\epsilon}}\norm{Iv_{\gtrsim N}}^{\frac{(k-1)\epsilon}{1-\epsilon}}_{H^{1-\epsilon}}\norm{Iv}_{H^{1-\epsilon}}^{k-1} \\
\lesssim &~N^{-(1-k\epsilon)} \norm{Iv}_{H^{1-\epsilon}}^k.
\end{align*}
Proceeding similarly and using \eqref{inverseI}, we have
\begin{align*}
 \norm{I\big(v_{\gtrsim N}v_{\lesssim N}^lv_{\gtrsim N}^{k-l-1}\big)}_{L^2} & \lesssim \norm{\big(v_{\gtrsim N}v_{\lesssim N}^lv_{\gtrsim N}^{k-l-1}\big)}_{L^2} \\
 & \lesssim N^{-(1-k\epsilon)} \norm{v}_{H^{1-\epsilon}}^k \\
 & \lesssim N^{-(1-k\epsilon)} \norm{Iv}_{H^{1}}^k.
\end{align*}
\end{proof}
\begin{Lemma} \label{commII}
 For every $\gamma > 0$, $0<\tilde s < 1$, there exist $p(\gamma) > 1, \eta(\gamma)> 0$ such that  
 \begin{equation} \label{eq:commII}
 \norm{(If)(Ig) - I(fg)}_{L^2} \lesssim_{\gamma, \tilde s} N^{\gamma - \frac{1-\tilde s}{2}} \norm{f}_{H^{1-\tilde s}}\norm{g}_{W^{-\eta(\gamma),1}\cap W^{-\eta(\gamma),p(\gamma)}} 
 \end{equation}
\end{Lemma}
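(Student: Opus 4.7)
My plan is to decompose both $f$ and $g$ dyadically in frequency using Littlewood-Paley projections, $f = \sum_{N_1} f_{N_1}$ and $g = \sum_{N_2} g_{N_2}$, and to exploit the cancellation that occurs at low frequencies. The key initial observation is that whenever $N_1, N_2 \ll N$, the multiplier $m_N$ equals $1$ identically on all frequencies appearing in $(If_{N_1})(Ig_{N_2})$ and in $I(f_{N_1} g_{N_2})$, so this piece of the commutator vanishes. This reduces the task to bounding the contributions where at least one of $N_1, N_2$ is $\gtrsim N$.

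I would then split the remaining contributions into three regimes: (a) high-low, with $N_1 \gtrsim N$ and $N_2 \ll N_1$; (b) low-high, with $N_2 \gtrsim N$ and $N_1 \ll N_2$; and (c) high-high, with $N_1 \sim N_2 \gtrsim N$. In regimes (a) and (b), the output frequency is concentrated near the larger of the two scales, and the plan is to apply the mean value theorem to $m_N$ together with the key bound $|\nabla m_N(\xi)| \lesssim m_N(\xi)/|\xi|$ valid for $|\xi| \gtrsim N$. In regime (a) this produces an estimate of the form
\[
\norm{(If_{N_1}) g_{N_2} - I(f_{N_1} g_{N_2})}_{L^2} \lesssim \frac{N_2}{N_1}\, m_N(N_1)\, \norm{f_{N_1}}_{L^2}\norm{g_{N_2}}_{L^\infty},
\]
the crucial gain being the factor $N_2/N_1 \ll 1$, with the symmetric estimate in regime (b). In regime (c), no such cancellation is available since the output frequency can descend below $N$; instead I would estimate $\norm{I(f_{N_1} g_{N_2})}_{L^2}$ and $\norm{(If_{N_1})(Ig_{N_2})}_{L^2}$ separately by H\"older, relying on $N_1 \sim N_2$ to provide the required decay in the larger scale.

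With these pointwise bounds in hand, I would apply Bernstein's inequality to exchange $\norm{g_{N_2}}_{L^\infty} \lesssim N_2^{2/p + \eta} \norm{g_{N_2}}_{W^{-\eta, p}}$, and similarly use $\norm{f_{N_1}}_{L^2} \lesssim N_1^{-(1-\tilde s)} \norm{f_{N_1}}_{H^{1-\tilde s}}$. Summing the geometric series over the smaller of $N_1, N_2$, and then applying Cauchy-Schwarz over the dominant scale against the square-summable sequence $(\norm{f_{N_1}}_{H^{1-\tilde s}})_{N_1}$, I expect a total bound of the form $N^{2/p + \eta - (1-\tilde s)} \norm{f}_{H^{1-\tilde s}} \norm{g}_{W^{-\eta, p}}$. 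The complementary $W^{-\eta,1}$ norm in the hypothesis is what I would use to control the low-frequency piece $g_{N_2}$ with $N_2 \lesssim 1$, where Bernstein from $L^p$ provides no useful gain and, since we work on the unbounded domain $\R^2$, there is no inclusion $L^1\subset L^p$ available.

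The hard part will be simultaneously matching the exponents across all three regimes while preserving uniformity in $\tilde s \in (0,1)$. To obtain an arbitrarily small loss $N^{\gamma}$ rather than $N^0$, one must choose $p = p(\gamma)$ large enough and $\eta = \eta(\gamma)$ small enough so that $2/p + \eta \le \gamma + (1-\tilde s)/2$, which forces $p \to \infty$ and $\eta \to 0$ as $\gamma \to 0$; this is precisely the source of the $\gamma$-dependence of $p$ and $\eta$ in the statement. I do not anticipate any subtlety beyond the standard mean value argument and Bernstein bookkeeping, but careful tracking of the exponents across the dyadic sums will be essential to verify that the constraints can be met simultaneously for every $\gamma > 0$.
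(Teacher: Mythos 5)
Your proposal is correct, and it follows a genuinely different route from the paper's. The paper does not run a full Littlewood--Paley decomposition: it performs a single asymmetric split $f = f_{\lesssim N^{1/2}} + f_{\gtrsim N^{1/2}}$, $g = g_{\lesssim N} + g_{\gtrsim N}$ and estimates the four resulting pieces separately. The low-low piece vanishes exactly, as you also observe. The two pieces in which $f$ sits above $N^{1/2}$ are estimated with no cancellation at all, using duality, H\"older, fractional Leibniz, Sobolev and the operator bounds \eqref{I}--\eqref{inverseI}; the gain there comes solely from $\norm{f_{\gtrsim N^{1/2}}}_{L^2}\lesssim N^{-(1-\tilde s)/2}\norm{f}_{H^{1-\tilde s}}$, which is exactly why the threshold $N^{1/2}$ and the power $(1-\tilde s)/2$ appear in the statement. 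The only piece that exploits the multiplier cancellation you rely on is $f_{\lesssim N^{1/2}}\,g_{\gtrsim N}$, and there the paper proceeds by duality and H\"older directly on the frequency integral, applying the mean-value bound on the symbol pointwise and then distributing weights; this sidesteps the step you would need to upgrade a pointwise bound on $m_N(\xi_1)m_N(\xi_2)-m_N(\xi_1+\xi_2)$ to an $L^2\times L^\infty\to L^2$ bilinear estimate on each dyadic block, a Coifman--Meyer type argument rather than a pure H\"older-in-frequency one (the latter would hand you $\norm{\widehat{g_{N_2}}}_{L^1}$, not $\norm{g_{N_2}}_{L^\infty}$). In exchange, your decomposition is more systematic and in fact delivers the sharper exponent $N^{-(1-\tilde s)+O(\gamma)}$ rather than $N^{-(1-\tilde s)/2+\gamma}$, since every surviving block carries either the full $N_1^{-(1-\tilde s)}$ with $N_1\gtrsim N$ or a genuine mean-value gain; this does not, however, move the downstream bottleneck, which is Lemma \ref{commI}. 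Two minor points you will want to tidy: the claimed gain $N_2/N_1$ in the high-low regime is only literal for $N_2<N$, since for $N\lesssim N_2\ll N_1$ the cross term $m_N(\xi_1)\bigl(m_N(\xi_2)-1\bigr)$ carries no mean-value factor (it is still summable because $m_N(N_1)\norm{f_{N_1}}_{L^2}$ already decays enough); and once the $N_2/N_1$ gain is in hand, the $N_2\lesssim 1$ tail converges using $\norm{g}_{W^{-\eta,p}}$ alone, so in your decomposition the $W^{-\eta,1}$ component of the hypothesis may turn out to be dispensable, whereas the paper genuinely uses it (through interpolation to $H^{-\delta}$) in its duality estimate.
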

\begin{proof}
 As in the proof of Lemma \ref{commI}, let us define 
 $$ u_{\lesssim M} := \int_{|\xi|< M/3} \widehat u(\xi) e^{i\xi\cdot x}$$
 and $u_{\gtrsim M} := u - u_{\lesssim M}$. Writing $f = f_{\lesssim N^{\frac 12}} + f_{\gtrsim N^{\frac12}}$ 
 and $g= g_{\lesssim N} + g_{\gtrsim N}$, we have that
 \begin{align}
&~ (If)(Ig) - I(fg) \notag\\
 =&~\left(If_{\lesssim N^{\frac12}}\middle)\middle(Ig_{\lesssim N}\right) - I\left(f_{\lesssim N^{\frac12}}g_{\lesssim N}\right)\tag{I}\\
+&~\left(If_{\lesssim N^{\frac12}}\middle)\middle(Ig_{\gtrsim N}\right) - I\left(f_{\lesssim N^{\frac12}}g_{\gtrsim N}\right)\tag{II}\\
+&~\left(If_{\gtrsim N^{\frac12}}\middle)\middle(Ig\right)\tag{III}\\
-&~I\left(f_{\gtrsim N^{\frac12}}g\right)\tag{IV}
 \end{align}
We have that
\begin{itemize}
 \item $\mathrm I=0$, since $If_{\lesssim N^{\frac12}} = f_{\lesssim N^{\frac12}}$, $Ig_{\lesssim N} = g_{\lesssim N}$ by
 definition of $I$, and $(f_{\lesssim N^{\frac12}}g_{\lesssim N})\widehat{\phantom{c}} (\xi)~\neq 0$ only for $|\xi| \le (N+N^\frac12)/3 < N$, so
 $I\left(f_{\lesssim N^{\frac12}}g_{\lesssim N}\right) = f_{\lesssim N^{\frac12}}g_{\lesssim N}$ as well.
 \item $\norm{\mathrm{II}}_{L^2}$ can be written as $\sup_{\norm{h}_{L^2} = 1} \int_{\R^2} h\cdot (\mathrm{II})$. 
 Calling $\widehat f_{\lesssim N^{\frac12}} = a$, $\widehat g_{\gtrsim N} = b$, expanding
 $\mathrm{II}$ in Fourier series and using Plancherel, we have to estimate
 $$ \iint_{\xi_1< N^\frac12/3,\xi_2> N/3} a(\xi_1)b(\xi_2)\Big(m_N(\xi_1)m_N(\xi_2)-m_N(\xi_1+\xi_2)\Big)\hat h(\xi_1+\xi_2)\d\xi_1\d\xi_2.$$
 Using the fact that on the considered interval $m(\xi_1)\equiv 1$ and that, by the mean value theorem, 
 $|m(\xi_1+\xi_2)-m(\xi_1)| \lesssim_s N^{1-s} |\xi_1||\xi_2|^{-2+s}$, we have that 
 \begin{align*}
 &~\int_{\R^2} h\cdot (\mathrm{II}) \\
 =&~\iint_{\xi_1< N^\frac12/3,\xi_2> N/3} a(\xi_1)b(\xi_2)\Big(m_N(\xi_1)m_N(\xi_2)-m_N(\xi_1+\xi_2)\Big)\hat h(\xi_1+\xi_2)\d\xi_1\d\xi_2\\
\lesssim &~N^{(1-s)}\iint_{\xi_1< N^\frac12/2,\xi_2\ge N/2} \left|\frac{a(\xi_1)}{|\xi_1|^{\tilde s+\delta}}\right| \left|\frac{b(\xi_2)|}{|\xi_2|^{\delta}}\right|\frac{|\xi_1|^{1+\tilde s+\delta}}{|\xi_2|^{2-s-\delta}}\left|\hat h(\xi_1+\xi_2)\right|\d\xi_1\d\xi_2\\
\lesssim &~N^{-\left(\frac{1-\tilde s}{2}-\frac32\delta\right)}\iint_{\xi_1< N^\frac12/2,\xi_2\ge N/2} \left|\frac{a(\xi_1)}{|\xi_1|^{\tilde s+\delta}}\right| \left|\frac{b(\xi_2)|}{|\xi_2|^{\delta}}\right|\left|\hat h(\xi_1+\xi_2)\right| \d\xi_1\d\xi_2\\
\lesssim &~N^{-\left(\frac{1-\tilde s}{2}-\frac32\delta\right)} \norm{\frac{a(\xi_1)}{|\xi_1|^{\tilde s+\delta}}}_{L^1} \norm{\frac{b(\xi_2)|}{|\xi_2|^{\delta}}}_{L^2} \norm{\hat h}_{L^2} \\
\lesssim &~N^{-\left(\frac{1-\tilde s}{2}-\frac32\delta\right)} \norm{f}_{H^{1-\tilde s}} \norm{g}_{H^{-\delta}} \norm{h}_{L^2},
\end{align*}
therefore $\norm{\mathrm{II}}_{L^2}\lesssim N^{-\left(\frac{1-\tilde s}{2}-\frac32\delta\right)}\norm{f}_{H^{1-\tilde s}} \norm{g}_{H^{-\delta}}$.
\item By H\"older, Sobolev embeddings and \eqref{I},
\begin{align*}
 \norm{\mathrm{III}}_{L^2} & \lesssim \norm{If_{\gtrsim N^{\frac12}}}_{L^2} \norm{Ig}_{L^\infty} \\
			   & \lesssim_\delta N^{-\frac{1-\tilde s}{2}} \norm{f}_{H^{1-\tilde s}}\norm{Ig}_{W^{3\delta,\delta^{-1}}}\\
			   & \lesssim_\delta N^{-\frac{1-\tilde s}{2}+ 4 \delta} \norm{f}_{H^{1-\tilde s}}\norm{g}_{W^{-\delta,\delta^{-1}}}
\end{align*}
\item By duality (like for $\mathrm{II}$), self-adjointness of $I$, fractional Liebnitz inequality, Sobolev embeddings and \eqref{I}, we have 
\begin{align*}
&~\int h\cdot(\mathrm{IV}) \\
=&~-\int h I(f_{\gtrsim N^{\frac12}}g)\\
= &~-\int I(h) f_{\gtrsim N^\frac12} g\\
\lesssim &~\norm{I(h)f_{\gtrsim N^{\frac12}}}_{W^{2\delta,(1-\delta)^{-1}}} \norm{g}_{W^{-2\delta,\delta^{-1}}}\\
\lesssim &~\begin{multlined}[t] \norm{g}_{W^{-2\delta,\delta^{-1}}} 
			     			\left(\norm{I(h)}_{H^{2\delta}}\norm{f_{\gtrsim N^{\frac12}}}_{L^{\left(\frac12-\delta\right)^{-1}}} \vphantom{\norm{I(h)}_{L^{\left(\frac12-\delta\right)^{-1}}}\norm{f_{\gtrsim N^{\frac12}}}_{L^2}}\right.\\
						\left.\vphantom{\norm{I(h)}_{H^{\delta}}\norm{f_{\gtrsim N^{\frac12}}}_{L^{\left(\frac12-\delta\right)^{-1}}}}+ \norm{I(h)}_{L^{\left(\frac12-\delta\right)^{-1}}}\norm{f_{\gtrsim N^{\frac12}}}_{H^{2\delta}}\right)\end{multlined} \\
\lesssim &~\norm{g}_{W^{-2\delta,\delta^{-1}}}\norm{I(h)}_{H^{2\delta}}\norm{f_{\gtrsim N^{\frac12}}}_{H^{2\delta}} \\
\lesssim &~N^{2\delta} N^{-\frac12\left(1-\tilde s-2\delta\right)} \norm{g}_{W^{-2\delta,\delta^{-1}}} \norm{h}_{L^2}\norm{f_{\gtrsim N^{\frac12}}}_{H^{1-\tilde s}}\\
\lesssim &~N^{-\left(\frac{1-\tilde s}{2}-3\delta\right)}\norm{g}_{W^{-2\delta,\delta^{-1}}} \norm{h}_{L^2}\norm{f_{\gtrsim N^{\frac12}}}_{H^{1-\tilde s}},
\end{align*}
so $\norm{\mathrm{IV}}_{L^2}\lesssim N^{-\left(\frac{1-\tilde s}{2}-3\delta\right)}\norm{g}_{W^{-2\delta,\delta^{-1}}} \norm{f}_{H^{1-\tilde s}}$.
\end{itemize}
Therefore, by choosing $\delta$ such that $\gamma \ge 4\delta$, $\eta = \delta$ and $p = \delta^{-1}$, we obtain \eqref{eq:commII}.
 \end{proof}
\begin{Lemma} \label{commutator}
Let $0<k\le 2$. For every $\gamma > 0$, there exist $p(\gamma) > 1, \eta(\gamma)> 0$ such that
\begin{multline} \label{comm}
\norm{I\big(v^k\rho\wick{\psi^{3-k}}\big) - (Iv)^k I\big(\rho\wick{\psi^{3-k}}\big)}_{L^2} 
\\ \lesssim_{s,\gamma} N^{-\frac{1-k(1-s)}{2} + \gamma} \norm{Iv}_{H^1}^k \norm{\rho\wick{\psi^{3-k}}}_{W^{-\eta(\gamma),1}\cap W^{-\eta(\gamma),p(\gamma)}}.
\end{multline}
\end{Lemma}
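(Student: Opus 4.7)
The plan is to decompose the commutator as
\[
I\bigl(v^k \rho\wick{\psi^{3-k}}\bigr) - (Iv)^k I\bigl(\rho\wick{\psi^{3-k}}\bigr) = A + B,
\]
where
\[
A := I\bigl(v^k \rho\wick{\psi^{3-k}}\bigr) - I(v^k)\, I\bigl(\rho\wick{\psi^{3-k}}\bigr), \qquad B := \bigl(I(v^k) - (Iv)^k\bigr)\, I\bigl(\rho\wick{\psi^{3-k}}\bigr).
\]
The piece $A$ is a commutator of $I$ with a product, which fits exactly into Lemma \ref{commII}; the piece $B$ is the pointwise commutator from Lemma \ref{commI} multiplied by an $I$-regularised rough factor, which I will put in $L^\infty$ via a Sobolev embedding combined with \eqref{I}.

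For $A$, the plan is to apply Lemma \ref{commII} with $f = v^k$, $g = \rho\wick{\psi^{3-k}}$, and the choice $\tilde s := k(1-s)$. Since $k \in \{1,2\}$ and $s > \frac{4}{5}$, we have $\tilde s \in (0, 2/5) \subset (0,1)$, so the hypothesis is satisfied, yielding
\[
\norm{A}_{L^2} \lesssim N^{\gamma - \frac{1-k(1-s)}{2}}\, \norm{v^k}_{H^{1-k(1-s)}}\, \norm{\rho\wick{\psi^{3-k}}}_{W^{-\eta(\gamma),1}\cap W^{-\eta(\gamma),p(\gamma)}}.
\]
It then remains to estimate $\norm{v^k}_{H^{1-k(1-s)}} \lesssim \norm{Iv}_{H^1}^k$. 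For $k=1$ this is just \eqref{inverseI} with $\epsilon = 1-s$. For $k=2$ one has $1-k(1-s) = 2s-1 > 0$, and the standard Sobolev product law on $\R^2$ gives $H^s \cdot H^s \hookrightarrow H^{2s-1}$ whenever $s > 1/2$, so $\norm{v^2}_{H^{2s-1}} \lesssim \norm{v}_{H^s}^2 \lesssim \norm{Iv}_{H^1}^2$ by a second application of \eqref{inverseI}.

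For $B$, I would use H\"older to get
\[
\norm{B}_{L^2} \le \norm{I(v^k) - (Iv)^k}_{L^2}\, \norm{I\bigl(\rho\wick{\psi^{3-k}}\bigr)}_{L^\infty}.
\]
Lemma \ref{commI} controls the first factor by $N^{-(1-k(1-s))}\norm{Iv}_{H^1}^k$. For the second factor, choosing $p>2$ large and $\delta>0$ small so that $W^{2/p+\delta,p}(\R^2) \hookrightarrow L^\infty$, and using \eqref{I}, gives
\[
\norm{I\bigl(\rho\wick{\psi^{3-k}}\bigr)}_{L^\infty} \lesssim \norm{I\bigl(\rho\wick{\psi^{3-k}}\bigr)}_{W^{2/p+\delta,p}} \lesssim N^{2/p + \delta + \eta}\, \norm{\rho\wick{\psi^{3-k}}}_{W^{-\eta,p}}.
\]
The total power of $N$ for $B$ is thus $-(1-k(1-s)) + 2/p + \delta + \eta$; since $1-k(1-s) > 0$ we may choose $p$ large and $\delta, \eta$ small (in terms of $\gamma$) so that this is at most $-\frac{1-k(1-s)}{2} + \gamma$, matching (in fact, improving on) the bound obtained for $A$. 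Taking the final $p(\gamma), \eta(\gamma)$ compatible with both $A$ and $B$ finishes the proof.

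\emph{The main obstacle} is the Sobolev product estimate $H^s(\R^2) \cdot H^s(\R^2) \hookrightarrow H^{2s-1}(\R^2)$ used in the $k=2$ case of the $A$-bound, which requires $s>\frac12$; this is comfortably satisfied under the standing assumption $s > \frac45$. Beyond that, the proof is essentially a matter of consistently distributing the small parameters $\gamma, \delta, \eta, 1/p$ between the two pieces.
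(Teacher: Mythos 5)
Your decomposition into $A$ and $B$, the application of Lemma~\ref{commII} to $A$ with $\tilde s = k(1-s)$, and the H\"older/$L^\infty$ Sobolev-embedding treatment of $B$ via Lemma~\ref{commI} and~\eqref{I} all coincide with the paper's proof. The only cosmetic difference is that you bound $\norm{v^k}_{H^{1-k(1-s)}}$ via the product law $H^s\cdot H^s\hookrightarrow H^{2s-1}$ for $k=2$, whereas the paper uses a fractional Leibniz rule combined with the embedding $W^{s,\frac{2}{1+(k-1)(1-s)}}\hookrightarrow H^{1-k(1-s)}$; both are standard and yield the same inequality $\norm{v^k}_{H^{1-k(1-s)}}\lesssim\norm{v}_{H^s}^k\lesssim\norm{Iv}_{H^1}^k$.
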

\begin{proof}
We have that
\begin{align}
&~\norm{I\big(v^k\rho\wick{\psi^{3-k}}\big) - (Iv)^k I\big(\rho\wick{\psi^{3-k}}\big)}_{L^2} \notag\\
\lesssim &~\norm{I\big(v^k\rho\wick{\psi^{3-k}}\big) -I\big(v^k\big)I\big(\rho\wick{\psi^{3-k}}\big)}_{L^2} \tag{I}\\
+ &~\norm{\left(I\big(v^k\big) - \big(Iv\big)^k\right)I\big(\rho\wick{\psi^{3-k}}\big)}_{L^2} \tag{II}.
\end{align}
\begin{itemize}
\item By Sobolev embeddings and fractional Liebnitz, we have that 
$$ \norm{v^{k}}_{H^{1-k(1-s)}} \lesssim \norm{v^{k}}_{W^{s,\frac{2}{1+(k-1)(1-s)}}}
\lesssim \norm{v}_{H^{s}}\norm{v}_{L^{\frac2{(1-s)}}}^{k-1} \lesssim \norm{v}_{H^{s}}^k.$$
Therefore, by \eqref{eq:commII},
$$\norm{\mathrm{I}}_{L^2} \lesssim N^{-\frac{1-k(1-s)}2 + \gamma} \norm{v}^k_{H^{s}} \norm{\rho\wick{\psi^{3-k}}}_{W^{-\eta(\gamma),p(\gamma)}}. $$
From \eqref{inverseI}, we have that $\norm{v}_{H^{s}} \lesssim \norm{Iv}_{H^1},$ so
$$\norm{\mathrm I}_{L^2} \lesssim_{(1-s),\gamma} N^{-\frac{1-k(1-s)}2 + \gamma}\norm{Iv}_{H^1}^k \norm{\rho\wick{\psi^{3-k}}}_{W^{-\eta(\gamma),p(\gamma)}}. $$
\item From H\"older, \eqref{eq:commI} and Sobolev embeddings, we have 
\begin{align*}
\norm{\mathrm{II}}_{L^2} & \lesssim \norm{I\big(v^k\big) - \big(Iv\big)^k}_{L^2} \norm{I\big(\rho\wick{\psi^{3-k}}\big)}_{L^\infty} \\
	& \lesssim_{s,\delta} N^{-(1-k(1-s))} \norm{Iv}_{H^1}^k \norm{I\big(\rho\wick{\psi^{3-k}}\big)}_{W^{3\delta,\delta^{-1}}} \\
	& \lesssim_{s,\delta} N^{-(1-k(1-s))} N^{4\delta} \norm{Iv}_{H^1}^k \norm{\rho\wick{\psi^{3-k}}}_{W^{-\delta,\delta^{-1}}}.
\end{align*}
Choosing $\delta$ small enough, we have that $1-k(1-s) - 4\delta > \frac{1-k(1-s)}2 - \gamma$, so the main contribution comes from $\mathrm{I}$. We get \eqref{comm} by taking $\gamma' =\ge 4\delta$, 
$p(\gamma') = \max(p(\gamma),\delta^{-1})$, $\eta(\gamma') = \max(\eta(\gamma),\delta)$, and then renaming $\gamma = \gamma'$.
\end{itemize}
\end{proof}
\begin{Lemma}\label{worstestimatelemma}
There exists $c>0$ such that for every $0< \eta < \frac18$, 
\begin{equation} \label{worstestimate}
\int_{t_0}^T\eqref{worst}(s)\d s
\lesssim \left(1 + \int_{t_0}^T E^{1 + c \eta}\right) \norm{I \psi}_{L^{\eta^{-1}}_{t,x}}
\end{equation}
\end{Lemma}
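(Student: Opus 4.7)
The plan is to bound the spatial integral $\int Iv_t(Iv)^2 I(\rho\psi)$ pointwise in time by Hölder's inequality in $x$, then to apply Hölder in $t$ to separate the stochastic factor. In space, I would use the exponents $(2,\alpha,\alpha,\eta^{-1})$ with $\alpha=\tfrac{4}{1-2\eta}$, which are admissible because $\tfrac12+\tfrac{2}{\alpha}+\eta=1$. This yields
\begin{equation*}
\Big|\int Iv_t\,(Iv)^2\,I(\rho\psi)\Big| \le \norm{Iv_t}_{L^2}\,\norm{Iv}_{L^{\alpha}}^2\,\norm{I(\rho\psi)}_{L^{1/\eta}}.
\end{equation*}
Since $\alpha\in(4,8)$ for $\eta\in(0,\tfrac18)$, the crucial step is the interpolation
\begin{equation*}
\norm{Iv}_{L^{\alpha}}\le \norm{Iv}_{L^4}^{1-4\eta}\,\norm{Iv}_{L^8}^{4\eta},
\end{equation*}
where the exponent $\theta=1-4\eta$ is forced by $\tfrac{1-2\eta}{4}=\tfrac{\theta}{4}+\tfrac{1-\theta}{8}$. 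The $L^4$ factor is controlled directly by the \emph{potential} part of the energy, $\norm{Iv}_{L^4}\lesssim E^{1/4}$, while for the $L^8$ factor one uses the 2D Sobolev embedding $H^{3/4}\hookrightarrow L^8$ together with the interpolation $\norm{Iv}_{H^{3/4}}\lesssim \norm{Iv}_{H^1}^{3/4}\norm{Iv}_{L^2}^{1/4}\lesssim E^{1/2}$.

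Combining these with $\norm{Iv_t}_{L^2}\lesssim E^{1/2}$ yields the pointwise estimate
\begin{equation*}
\Big|\int Iv_t(Iv)^2 I(\rho\psi)\Big| \lesssim E(s)^{1+2\eta}\, \norm{I(\rho\psi)(s)}_{L^{1/\eta}}.
\end{equation*}
Exploiting the $L^4$ factor here is essential, and this is the main conceptual obstacle. The naive Sobolev bound $\norm{Iv}_{L^{\alpha}}\lesssim \norm{Iv}_{H^1}\lesssim E^{1/2}$ would produce an overall $E^{3/2}$, which is too large for the Gronwall argument of this section to close on any interval growing past $O(1)$; only the $E^{1+O(\eta)}$ power, which can be made arbitrarily close to linear by taking $\eta$ small, permits the iterative scheme with successively updated $N$ described in the introduction.

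For the time integration, I would apply Hölder with the conjugate pair $\bigl(\tfrac{1}{1-\eta},\tfrac{1}{\eta}\bigr)$, obtaining
\begin{equation*}
\int_{t_0}^T \eqref{worst}(s)\,\d s \lesssim \Big(\int_{t_0}^T E^{\frac{1+2\eta}{1-\eta}}\Big)^{1-\eta}\,\norm{I(\rho\psi)}_{L^{1/\eta}_{t,x}}.
\end{equation*}
The elementary inequality $x^{1-\eta}\le 1+x$ removes the outer $(1-\eta)$-power, while the identity $\tfrac{1+2\eta}{1-\eta}=1+\tfrac{3\eta}{1-\eta}$ together with $\eta<\tfrac18$ gives an exponent of the form $1+c\eta$ with $c=\tfrac{24}{7}$. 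Identifying $\norm{I(\rho\psi)}_{L^{1/\eta}_{t,x}}$ with the quantity written $\norm{I\psi}_{L^{\eta^{-1}}_{t,x}}$ in the statement of the lemma (the bound of Proposition \ref{psi}(v) has been established precisely for $I(\rho\psi)$) then completes the proof.
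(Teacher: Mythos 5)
Your proof is correct and follows essentially the same strategy as the paper: Hölder in space to isolate the noise factor, interpolation that exploits the $L^4$ part of the energy to bring the power of $E$ down to $1+O(\eta)$, then Hölder in time followed by $x^{1-\eta}\le 1+x$. The only cosmetic difference is the interpolation: you interpolate $L^\alpha$ between $L^4$ and $L^8$ (with $L^8$ coming from $H^{3/4}\hookrightarrow L^8$), whereas the paper uses an asymmetric Hölder split $\norm{Iv}_{L^4}\norm{Iv}_{L^{4/(1-4\eta)}}$ and controls the second factor via Gagliardo--Nirenberg between $L^4$ and $H^1$ followed by Sobolev; both routes give $E^{1+O(\eta)}$ and either is fine.
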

\begin{proof}
 Let $0< \theta < 1$. Since we have 
 \begin{align*}
  \norm{Iv}_{H^1} & \lesssim E^\frac12 \\
  \norm{Iv}_{L^4} & \lesssim E^\frac 14,
 \end{align*}
 by Gagliardo-Niremberg we have $\norm{Iv}_{W^{\theta, \frac 4 {1+\theta}}} \lesssim E^{\frac{1+\theta}4}.$
 Therefore, by Sobolev inequality, we have that $\norm{Iv}_{L^{\frac 4 {1-\theta}}} \lesssim E^{\frac{1+\theta}4}$,
 and the implicit constant is uniform in $\theta$ as long as $0\le \theta \le \theta_{\max} <1$. Take 
 $\theta = 4 \eta$. Therefore, by H\"older, 
 \begin{align*}
   \left|\int_{t_0}^T \int_{\T^2} Iv_t (Iv)^2 I \psi\right| 
   &\le \int_{t_0}^T\norm{Iv_t}_{L^2} \norm{Iv}_{L^4} \norm{Iv}_{L^{\frac 4 {1-4\eta}}} \norm{I \psi}_{L_x^{\eta^{-1}}} \\
   & \lesssim \int_{t_0}^T E^\frac12 E^\frac14 (E^\frac{1+4\eta}4)\norm{I \psi}_{L_x^{\eta^{-1}}} \\
   & \lesssim \int_{t_0}^T \left(E^{1+\eta}\right)\norm{I \psi}_{L_x^{\eta^{-1}}} \\
   & \lesssim \left(\int_{t_0}^T \left(E^{1 + \eta}\right)^{\frac 1 {1-\eta}}\right)^{1-\eta} \norm{I \psi}_{L_{x,t}^{\eta^{-1}}} \\
   & \lesssim \left(1 + \int_{t_0}^T E^{\frac{1 + \eta}{1-\eta}}\right)\norm{I \psi}_{L_{x,t}^{\eta^{-1}}}.
 \end{align*}
Therefore, choosing $c = \max_{\eta \in [0, \frac18]} \eta^{-1} \left(\frac{1 + \eta}{1-\eta} - 1\right)$, we have
 $$ \left|\int_{t_0}^T \int_{\T^2} Iv_t (Iv)^2 I \psi\right| \lesssim \left(1 + \int_{t_0}^T  E^{1+c\eta}\right)\norm{I \psi}_{L_{x,t}^{\eta^{-1}}}, $$
 which gives \eqref{worstestimate}.
\end{proof}
\begin{Lemma}\label{Gronwall0}
Let $T>0$. For every $|t - t_0| \le T$, for every $0<\eta\le \frac18$, we have that for every $\gamma > 0$,
\begin{align} 
&~E(t) - E(t_0) \notag\\
\lesssim_{s, \gamma, T}&~\left(1 + \int_{t_0}^t E^{1 + c \eta}\right) \norm{I \psi}_{L^{\eta^{-1}}_{t,x}} \label{worsti} \\
+&~\int_{t_0}^t N^{-(1-3(1-s))} E^2\label{commIi}\\
+&~\sum_{k=0}^2 \int_{t_0}^t N^{-\frac{1-k(1-s)}2 + \gamma}E^\frac {k+1} 2 \norm{\wick{\psi^{3-k}}}_{L^\infty_tW^{-\eta(\gamma),1}\cap W^{-\eta_k(\gamma), p(\gamma) }}\label{commIIi} \\
+&~\int_{t_0}^t N^\gamma \Big(E^\frac34 \norm{\rho\wick{\psi^2}}_{W^{-\gamma,4}} + E^\frac12 \norm{\rho\wick{\psi^3}}_{H^{-\gamma}} \Big) \label{tamei}\\ 
+&~\int_{t_0}^t E(s)\d s \label{harmlessi},
\end{align}
where $c$ is the one given by Lemma \ref{worstestimatelemma} and $\eta(\gamma), p(\gamma)$ are the ones given by Lemma \ref{commutator}.
\end{Lemma}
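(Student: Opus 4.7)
The plan is to integrate the identity of Lemma \ref{timeDerivative} from $t_0$ to $t$, and then dispatch each of the four resulting time integrals by the pointwise-in-$s$ estimates already established. Concretely, by the Fundamental Theorem of Calculus,
$$E(t) - E(t_0) = \int_{t_0}^t \bigl(\eqref{worst}(s) + \eqref{tame}(s) + \eqref{commutators}(s) + \eqref{harmless}(s)\bigr)\, ds,$$
so it suffices to bound each of these four integrals in turn.

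First I would handle the three direct groups. The harmless piece integrates trivially via \eqref{harmlessestimate}, producing \eqref{harmlessi}. The tame piece integrates via \eqref{tameestimate}, producing \eqref{tamei}. The worst piece is already a time integral estimate in Lemma \ref{worstestimatelemma}, producing \eqref{worsti} verbatim.

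The commutator integral \eqref{commutators} is the one that requires slightly more assembly, but the structure is uniform across its three summands. For each of them I apply Cauchy--Schwarz in $x$ to factor out $\norm{Iv_t(s)}_{L^2} \lesssim E(s)^{\frac12}$, and then estimate the $L^2$ norm of the remaining commutator. For $(Iv)^3 - I(v^3)$ the $L^2$ bound comes from Lemma \ref{commI} with $k=3$ at cost $N^{-(1-3(1-s))}\norm{Iv}_{H^1}^3 \lesssim N^{-(1-3(1-s))} E^{\frac32}$; pairing with the $E^{\frac12}$ factor and integrating in $s$ yields \eqref{commIi}. For the mixed commutators $(Iv)^k I(\rho\wick{\psi^{3-k}}) - I(v^k \rho\wick{\psi^{3-k}})$ with $k \in \{1,2\}$, Lemma \ref{commutator} provides the bound $N^{-\frac{1-k(1-s)}{2} + \gamma} E^{\frac{k}{2}} \norm{\rho\wick{\psi^{3-k}}}_{W^{-\eta(\gamma),1}\cap W^{-\eta(\gamma),p(\gamma)}}$; combined with $E^{\frac12}$, pulling the spatial norm of the stochastic factor out of the time integral via the $L^\infty_t$ norm, and integrating in $s$ produces the $k=1,2$ summands in \eqref{commIIi}. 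The $k=0$ summand of \eqref{commIIi} is redundant for this argument, being already subsumed by \eqref{tamei}, and may be added to the right-hand side freely.

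No genuine obstacle appears at this stage: the heavy lifting is all in the preceding lemmas, notably the $L^2$ commutator estimates of Lemmas \ref{commI} and \ref{commutator} and the sharp $L^p_{t,x}$ large-deviation bound for $I\psi$ underlying Lemma \ref{worstestimatelemma}. The only point worth emphasizing is that one must use the $L^2$ versions of the commutator bounds so that they pair cleanly against $\norm{Iv_t}_{L^2}$, producing the half-integer powers of $E$ that appear on the right-hand side.
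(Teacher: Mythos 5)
Your proposal is correct and matches the paper's argument: integrate Lemma \ref{timeDerivative}, then apply the estimates \eqref{harmlessestimate}, \eqref{tameestimate}, \eqref{worstestimate}, \eqref{eq:commI}, and \eqref{comm} termwise, pairing the commutators against $\norm{Iv_t}_{L^2}\lesssim E^{1/2}$ via Cauchy--Schwarz. Your observation that the $k=0$ summand of \eqref{commIIi} is redundant is accurate; the paper simply states the sum over $k=0,1,2$ without comment, and that extra term only enlarges the upper bound.
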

\begin{proof}
By Lemma \ref{timeDerivative},  
\begin{equation*}
E(t) - E(t_0) = \int_{t_0}^t \eqref{worst}(s) + \eqref{tame}(s) + \eqref{commutators}(s) + \eqref{harmless}(s) \d s.
\end{equation*}
We have that 
\begin{itemize}
\item From \eqref{worstestimate}, $\int_{t_0}^t \eqref{worst}(s) \d s\lesssim \eqref{worsti}$,
\item From \eqref{tameestimate}, $\int_{t_0}^t \eqref{tame}(s) \d s\lesssim \eqref{tamei}$,
\item From \eqref{eq:commI} for the fist term and \eqref{comm} for the second and third term respectively, and H\"older inequality,
$\int_{t_0}^t  \eqref{commutators}(s) \d s \lesssim \eqref{commIi} + \eqref{commIIi}$
\item From \eqref{harmlessestimate}, $\int_{t_0}^t \eqref{harmless}(s) \d s \lesssim \eqref{harmlessi}$.
\end{itemize}
\end{proof}
\begin{Lemma}\label{tau}
Let $T>0$, and let 
\begin{equation}
A(N) = \frac{\norm{I_N\rho\psi}_{L^{\log N}([-T,T]\times\R^2)}}{ \log N}.
\end{equation}
For $\gamma > 0$, $M\ge 1$,$\Lambda\ge1$, define 
\begin{gather}
\Omega_M^\gamma := \left\{ \max_k \norm{\wick{\rho \psi^{3-k}}}_{L^\infty_t([-T,T];W^{-(\max(\eta_k(\gamma),\gamma)), \max(p_k(\gamma),2)})} \le M \right\},\label{Omegagamma}\\
\Omega_\Lambda(N) := \left\{ A(N) \le \Lambda\right\}\label{OmegaLambda}.
\end{gather}
Then for $\gamma=\gamma(s)$ small enough, $\alpha < 1 - 3(1-s)$, $\delta< \beta < \alpha$, $\omega \in \Omega^\gamma_M$, there exists $\tau = \tau(s,M,\Lambda,\alpha-\beta)$
such that if $\omega \in \Omega_M^\gamma\cap\Omega_{\Lambda}(N)$, $E(t_0) \le N^\beta/2$, $|t_0| \le T$, $N\ge N_0 = N_0(s,T,M,\Lambda)$, then $E(t) \le N^\alpha$ for every $t$ such that $|t|\le T$ and 
$|t-t_0| \le \tau$.
\end{Lemma}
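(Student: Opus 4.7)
The plan is to combine the integral estimate of Lemma \ref{Gronwall0} with a continuity (bootstrap) argument on $E(s) := E(I_N v(s), I_N v_t(s))$. Set
\[
T^{\ast} := \sup\bigl\{\sigma \in [0,\tau] : E(t_0+s) \le N^\alpha \text{ for all } s \in [0,\sigma] \text{ with } |t_0+s| \le T\bigr\};
\]
since $E(t_0) \le N^\beta/2 < N^\alpha$ and $E$ is continuous in time, $T^{\ast} > 0$, and the goal is to show $T^{\ast} = \tau$ (the backward direction is symmetric). Assume for contradiction that $T^{\ast} < \tau$, so the bootstrap hypothesis $E(s) \le N^\alpha$ holds throughout $[t_0, t_0+T^{\ast}]$. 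The key choice in Lemma \ref{Gronwall0} is $\eta := 1/\log N$: on $\Omega_\Lambda(N)$ one has $\norm{I_N(\rho\psi)}_{L^{\log N}_{t,x}} = A(N)\log N \le \Lambda \log N$, while under the bootstrap $E^{1+c\eta} \le e^{c\alpha}\, E$.

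Inserting these into Lemma \ref{Gronwall0} and using the $\Omega_M^\gamma$ bounds for the Wick powers, each term splits into either a contribution absorbable into a linear Gronwall piece $K(N)\int E$ or a $t$-independent piece. Explicitly, \eqref{commIi} gives $N^{\alpha-(1-3(1-s))}\int E$, which is $o(1)\cdot\int E$ since $\alpha < 1-3(1-s)$; the $k=1$ part of \eqref{commIIi} gives $N^{-s/2+\gamma}M\int E$, absorbable once $\gamma < s/2$; and \eqref{harmlessi} gives $\int E$. The leftover pieces -- the $k = 0, 2$ parts of \eqref{commIIi}, the whole of \eqref{tamei}, and the constant $\Lambda \log N$ from \eqref{worsti} -- are bounded by $C(M,\Lambda,s)\, N^{\beta'}$ for some $\beta' < (\alpha+\beta)/2$ once $\gamma = \gamma(s)$ is taken small. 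Hence, for $N \ge N_0(s,T,M,\Lambda)$,
\[
E(t) \le \tfrac12 N^\beta + N^{(\alpha+\beta)/2} + K(N)\int_{t_0}^{t} E(s)\,ds, \qquad K(N) \le C_0\, \Lambda \log N,
\]
with $C_0 = C_0(s,M,\alpha)$.

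Gronwall now yields $E(t_0+T^{\ast}) \le 2 N^{(\alpha+\beta)/2} \exp\bigl(C_0\, \Lambda \log N \cdot T^{\ast}\bigr)$. Choosing
\[
\tau := \frac{\alpha-\beta}{8\, C_0\, \Lambda},
\]
which depends only on $s, M, \Lambda, \alpha-\beta$, gives $\exp(C_0 \Lambda \log N \cdot \tau) \le N^{(\alpha-\beta)/8}$, and hence $E(t_0+T^{\ast}) \le 2N^{(5\alpha+3\beta)/8}$. Since $\beta < \alpha$, $(5\alpha+3\beta)/8 < \alpha$, and for $N \ge N_0$ sufficiently large the factor of $2$ is absorbed so that $E(t_0+T^{\ast}) < N^\alpha$, contradicting the maximality of $T^{\ast}$. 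The chief obstacle is the calibration of $\eta$: the only usable control on $I_N(\rho\psi)$ is the logarithmic large deviation from Proposition \ref{psi}(v), which forces $\eta \sim 1/\log N$ and inflates the Gronwall constant to size $\Lambda \log N$; everything above is arranged so that this still produces only a polynomial factor $N^{O(\tau \Lambda)}$, which is smaller than the available gap $N^{(\alpha-\beta)/2}$ precisely because $\tau$ can be chosen proportional to $(\alpha-\beta)/\Lambda$, independently of $N$.
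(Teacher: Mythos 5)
Your proposal is correct and follows essentially the same route as the paper: a continuity/bootstrap argument with $\eta = 1/\log N$ in Lemma \ref{Gronwall0}, using $\Omega_\Lambda(N)$ to bound $\norm{I_N(\rho\psi)}_{L^{\log N}_{t,x}}$ by $\Lambda\log N$ and the bootstrap $E\le N^\alpha$ to control $E^{1+c\eta}\le e^{c\alpha}E$, then Gronwall with constant $\sim\Lambda\log N$ so that $\tau\propto(\alpha-\beta)/\Lambda$ keeps the exponential $\exp(\tau\,C\Lambda\log N)$ at a power $N^{o(\alpha-\beta)}$. The only differences from the paper are cosmetic: you phrase the bootstrap as a contradiction at the maximal time $T^*$ while the paper argues directly on $\bar t,\tilde t$, and you use an intermediate exponent $(\alpha+\beta)/2$ where the paper absorbs the constant pieces into $N^\beta$, but both lead to the same $\tau$ independent of $N$.
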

\begin{proof}
By Lemma \ref{Gronwall0}, as long as $E\le N^\alpha$, since $\alpha < 1 - 3(1-s)$, for $N$ big enough we have that 
$\eqref{commIi} + \eqref{commIIi} \le 1+\int_{t_0}^T E$. Similarly, from Young's inequality, for some universal constant $C$, we have
$$\eqref{tamei} \le \int_{t_0}^T E(s) \d s + C N^{4\gamma}M^4.$$
Choosing $\eta = (\log N)^{-1}$ in \eqref{worsti}, as long as $E \le N^\alpha$, we get
$$\eqref{worsti} \le A(N)\log N\Big(1 + \int_{t_0}^T E^{1 + c(\log N)^{-1}}(s) \d s\Big) \le \Lambda\log N\Big(1 + e^{c\alpha} \int_{t_0}^T E(s) \d s\Big).$$
Therefore, as long as $E(t) \le N^\alpha$, for $N$ big enough (depending on $s,T,M,\Lambda$),
\begin{align}
&~E(t) \notag\\
\le &~E(t_0) + C(s,\gamma,T)\Lambda\log N\Big(1 + \int_{t_0}^T E(s) \d s\Big)\notag\\
&~+1 + \int_{t_0}^T E(s) \d s + C N^{4\gamma}M^4 +  \int_{t_0}^T E(s) \d s \notag\\
\le &~\frac12N^\beta + C(s,\gamma,T)\Lambda\log N + C'N^{4\gamma}M^4 \notag\\
&~+ \Big(2 + C(s,\gamma,T)\Lambda\log N\Big) \int_{t_0}^T E(s) \d s.\notag
\\
\le&~ N^\beta + C'(s,\gamma,T,\Lambda)\log N\int_{t_0}^T E(s) \d s. \label{hopeful}
\end{align}
Let $\bar t = \max\{s: t_0\le s \le T, E(s) \le N^\alpha\}$, $\tilde t = \min\{s: t_0\ge s \ge -T, E(s) \le N^\alpha\}$. 
Then the lemma is proven if we show that if $\bar t \neq T$, then $|\bar t-t_0| \ge \tau(s,\gamma,T,\Lambda)$ and similarly if $\tilde t \neq -T$, then $|\tilde t-t_0| \ge \tau(s,\gamma,T,\Lambda)$.
For $\tilde t\le t \le \bar t$, by definition, \eqref{hopeful} holds, 
so by Gronwall
\begin{equation} \label{Gronwall}
E(t) \le N^\beta\exp\big(|t-t_0|C'(s,\gamma,T,\Lambda)\log N\big).
\end{equation}
Suppose that $\bar t \neq T$. Then one must have $E(\bar t) = N^\alpha$. Therefore, by \eqref{Gronwall}, $|\bar t - t_0| \ge \tau:= \frac{(\alpha-\beta)}{C'(s,\gamma,T,\Lambda)}$. The same holds for $\tilde t$, and the lemma is proven.
\end{proof}
\begin{proof}[Proof of Proposition \ref{lgwp}]
Let $\epsilon > 0$, $T>0$, let $2(1-s) < \beta < \alpha < 1-3(1-s)$, and let $\gamma$ as in Lemma \ref{tau}.\\
By Proposition \ref{psi},$\mathrm{(iv)}$, $\norm{\wick{\rho \psi^{3-k}}}_{L^\infty_t([-T,T];W^{-(\max(\eta_k(\gamma),\gamma)), \max(p_k(\gamma),2)})} < +\infty$ a.s., 
so there exists $M=M(T,\epsilon)$ such that $\P(\Omega_M^\gamma) \ge 1-\frac\epsilon2$, where $\Omega_M^\gamma$ is defined in \eqref{Omegagamma}. Moreover, by Proposition \ref{psi},$(\mathrm{v})$, 
$$P(\Omega_\Lambda(N)^c) \le C_{T,\rho,m}^{\log N} \Lambda^{-\log N} = N^{\log C_{T,\rho,m} - \log \Lambda}.$$ If $\Lambda = C_{T,\rho,m}e^2$, this expression is summable in $N$, so for $N$ big enough, $N\ge \tilde N = \tilde N(\epsilon)$, \linebreak$P(\bigcap_{N \ge \tilde N} \Omega_{\Lambda}(N)) \ge 1 - \frac \epsilon2$. Let 
$\Omega_{T,\epsilon} := \Omega_M^\gamma \cap \bigcap_{N \ge \tilde N} \Omega_{\Lambda}(N)$. By inclusion-exclusion, we have that $\P(\Omega_{T,\epsilon}) \ge 1 - \epsilon$.

For this choice of $M$, $\Lambda$, $\gamma$, $\alpha, \beta$, let $N_0$ and $\tau$ be the ones given by Proposition \ref{tau}, and take $(u_0, u_1) \in \H^s$. Define a sequence $N_k$ of integers recursively. Take $N_1$
such that $N_1 \ge \max(N_0,\tilde N)$ and 
$$N_1^{2(1-s)} \norm{(u_0,u_1)}_{\H^s}^2 + \norm{u_0}_{H^s}^4 \ll N_1^\beta.$$
By Sobolev embeddings, \eqref{I} and \eqref{inverseI},
\begin{equation}\label{inductionstep}
\begin{aligned}
E(I_{N_1}v(0),I_{N_1}v_t(0)) &= E(I_{N_1}u_0,I_{N_1}u_1) \\
&\lesssim \norm{(I_{N_1}u_0,I_{N_1}u_1)}_{H^1}^2 + \norm{I_{N_1}u_0}_{L^4}^4  \\
&\lesssim N_1^{2(1-s)} \norm{(u_0,u_1)}_{H^s}^2 + \norm{I_{N_1}u_0}_{H^s}^4 \\
&\lesssim N_1^{2(1-s)} \norm{(u_0,u_1)}_{H^s}^2 + \norm{u_0}_{H^s}^4, \\
\end{aligned}
\end{equation}
so we will have $E(I_{N_1}v(t_0),I_{N_1}v_t(t_0)) \le \frac12 N_1^\beta$, therefore by Lemma \ref{tau} one has that 
$\norm{(v(t),v_t(t))}_{\H^s}^2 \lesssim E(I_{N_1}v(t),I_{N_1}v_t(t)) \le N^\alpha$ for $t\le T$, $t_0 \le t\le t_0 + \tau$, and similarly backwards in time. \\
Then take $N_{k+1} \gg N_k$ such that 
\begin{equation}
N_{k+1}^{2(1-s)} N_k^\alpha + N_k^{2\alpha} \ll N_{k+1}^\beta.
\end{equation}
If one has 
\begin{equation}\label{induction}
E(I_{N_k}v(t_0 + (k-1)\tau),I_{N_k}v_t(t_0+(k-1)\tau)) \le \frac12 N_k^\beta \text{ if }t_0 + (k-1)\tau \le T,
\end{equation}
by Lemma \ref{tau}, 
\begin{equation}
\begin{aligned}
&~\norm{(v,v_t)}^2_{L^\infty([t_0 + (k-1)\tau, \min(t_0 + k\tau, T)];\H^s)} \\
\lesssim &~\sup_{t_0 + (k-1)\tau \le s\le \min(t_0 + k\tau, T)} E(I_{N_k}v(s),I_{N_k}v_t(s)) \le N_k^\alpha
\end{aligned}
\end{equation}
and similarly backwards in time. Therefore, because of the blowup condition \eqref{eq:blowup}, Propostion \ref{lgwp} is shown if we show \eqref{induction}. Proceeding inductively, we know \eqref{induction} for $N_1$, and proceeding as in \eqref{inductionstep},
\begin{align*}
&~E(I_{N_{k+1}}v(t_0 + k\tau),I_{N_k}v_t(t_0+k\tau)) \\
\lesssim &~N_{k+1}^{2(1-s)} \norm{(v(t_0 + k\tau),v_t(t_0 + k\tau))}_{H^s}^2 + \norm{v(t_0 + k\tau)}_{H^s}^4\\
\lesssim  &~N_{k+1}^{2(1-s)}E(I_{N_k}v(t_0 + k\tau),I_{N_k}v_t(t_0 + k\tau)) + E(I_{N_k}v(t_0 + k\tau),I_{N_k}v_t(t_0 + k\tau))^2 \\
\lesssim &~N_{k+1}^{2(1-s)}N_k^\alpha + N_k^{2\alpha} \ll N_{k+1}^\beta,
\end{align*}
so $E(I_{N_{k+1}}v(t_0 + k\tau),I_{N_k}v_t(t_0+k\tau)) \le \frac12N_{k+1}^\beta$ and we have \eqref{induction}.
\end{proof}

\section{Independence from the cutoff and global well-posedeness for the global equation}
In this section, we prove that on appropriate space-time regions, the solution to \eqref{LSNLW} does not depend on the particular choice of the cutoff $\rho$, 
and proceed to the proof of Theorem \ref{main}.
\begin{Proposition}[Finite speed of propagation for SNLW] \label{finitespeed}
Let $R,T>~0$, $x_0 \in \R^2$, $t_0 \in \R$. Let $u_1, u_2$ be solutions to \eqref{SNLW} on $B(x_0,R)$ for a time $T$,
in the sense that $u_j|_{B(x_0,R)} = \psi|_{B(x_0,R)} + v_j|_{B(x_0,R)}$, $v_j \in C([t_0-T,t_0+T];H^s_\loc)$, $s > \frac23$, and 
\begin{multline}\label{SNLWLocal}
v_j(t)|_{B(x_0,R)} = \left( \cos((t-t_0)|\nabla|) v_j(t_0) \vphantom{\int_{t_0}^t}\ + \frac{\sin((t-t_0)|\nabla|)}{|\nabla|} 
\partial_t v_j(t_0) \right. \\ \left. \left.+ \int_{t_0}^t \frac{\sin{((t-t')|\nabla|)}}{|\nabla|} \wick{(\psi+v_j)^3}(t')\d t'\right)\right|_{B(x_0,R)}
\end{multline}
for every $|t-t_0| \le T$.
Suppose moreover that $v_1(0)=v_2(0)$, $\partial_tv_1(0) = \partial_tv_2(0)$ on $B(x_0,R)$ and $v_j \in C([t_0-T,t_0+T]; H^s(\R^2))$, $s>\frac 12$. Then $v_1(t)|_{B(x_0,R-|t-t_0|)} 
= v_2(t)|_{B(x_0,R-|t-t_0|)}$ for every $|t-t_0|\le T$.
\end{Proposition}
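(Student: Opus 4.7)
The plan is to reduce to finite speed of propagation for the linear wave equation, then close with a Gronwall argument on shrinking light cones. Let $w := v_1 - v_2$. Subtracting the two instances of \eqref{SNLWLocal}, we get on $B(x_0, R)$ that
\begin{equation*}
w(t)|_{B(x_0,R)} = L(t)|_{B(x_0,R)} + D(t)|_{B(x_0,R)},
\end{equation*}
where $L(t) = \cos((t-t_0)|\nabla|)w(t_0) + \frac{\sin((t-t_0)|\nabla|)}{|\nabla|}w_t(t_0)$ is the free evolution and $D(t) = \int_{t_0}^t \frac{\sin((t-t')|\nabla|)}{|\nabla|}\bigl[\wick{(\psi+v_1)^3} - \wick{(\psi+v_2)^3}\bigr](t')\,\d t'$ is the Duhamel term. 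The hypotheses give $w(t_0) = w_t(t_0) = 0$ on $B(x_0, R)$, and $v_j \in C_t H^s(\R^2)$, so everything is globally defined; the nonvanishing of $w(t_0)$ outside $B(x_0,R)$ is what makes the statement nontrivial.

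The next step is to invoke classical finite speed of propagation for $\Box$ on $\R^2$. For any $(t,x)$ with $|t-t_0|\le T$ and $x\in B(x_0, R-|t-t_0|)$, the value $L(t,x)$ depends only on $w(t_0), w_t(t_0)$ restricted to $B(x, |t-t_0|) \subset B(x_0,R)$, hence vanishes. Likewise, $D(t,x)$ depends only on the integrand restricted to the backward solid cone through $(t,x)$; by the triangle inequality this cone is contained in the truncated spacetime cone $\mathcal C := \{(t',y) : t_0\le t' \le t,\ y\in B(x_0, R - |t'-t_0|)\}$ (the case $t<t_0$ being symmetric). Consequently, on the shrinking cone,
\begin{equation*}
w(t,x) = D(t,x), \qquad (t,x)\in \mathcal C,
\end{equation*}
and the right-hand side depends only on $\wick{(\psi+v_j)^3}$ restricted to $\mathcal C$.

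Now I would use the algebraic identity
\begin{equation*}
\wick{(\psi+v_1)^3} - \wick{(\psi+v_2)^3} = w\cdot\bigl[(v_1^2+v_1v_2 + v_2^2) + 3(v_1+v_2)\psi + 3\wick{\psi^2}\bigr],
\end{equation*}
so the difference is $w$ times a factor $G$ built from quantities already controlled on $\mathcal C$. To make the product with the distributional terms $\psi,\wick{\psi^2}$ rigorous, I would pick a cutoff $\rho\in C_c^\infty(\R^2)$ with $\rho\equiv 1$ on a slightly smaller ball $B(x_0, R-\delta)$ and supported in $B(x_0,R)$, replacing $F_1-F_2$ by its $\rho$-truncation (which agrees with it on the relevant cone), and then estimate using the Sobolev bounds on $v_j$ together with Proposition \ref{psi}; the regularity $s>\tfrac12$ globally ensures that products $v_j^k\rho\wick{\psi^{3-k}}$ make sense in appropriate Besov/Sobolev spaces, exactly as in Proposition \ref{LWP}. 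This yields an estimate of the form
\begin{equation*}
\|w(t)\|_{H^s(B(x_0, R-|t-t_0|-\delta))} \lesssim \int_{t_0}^t C(\omega,s,v_1,v_2)\,\|w(t')\|_{H^s(B(x_0, R-|t'-t_0|-\delta))}\,\d t'
\end{equation*}
(with constant bounded uniformly on $[t_0-T,t_0+T]$), and Gronwall forces $w\equiv 0$ on the cone shrunk by $\delta$. Sending $\delta\to 0$ concludes.

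The main obstacle is setting up the Gronwall functional on the shrinking cone rigorously despite the low regularity: $w$ is only in $H^s$ globally with $s>\tfrac12$, while $\psi,\wick{\psi^2}$ are distributions of negative regularity. The cleanest way is to bypass localized Sobolev norms by working with the global cutoff $\rho$ as above, which reduces every bound to the ones already established in Sections 2--3 for the localized equation \eqref{LSNLW}; the finite speed property transfers from the linear wave equation, which is the ingredient doing all the geometric work.
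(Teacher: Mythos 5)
Your proposal matches the paper's proof: both factor the difference of nonlinearities as $w$ times a bilinear expression in $v_1,v_2,\psi,\wick{\psi^2}$, use finite speed of propagation for the kernels of $\cos(s|\nabla|)$ and $\frac{\sin(s|\nabla|)}{|\nabla|}$ to restrict to the shrinking cone, control the product with the rough terms via a compactly supported cutoff $\rho$ and the product estimates of Proposition \ref{LWP}, and close with Gronwall. The only cosmetic difference is your $\delta$-shrinkage of the cone followed by $\delta\to0$; the paper works directly with $D_t = B_{R-|t|}$, which is harmless since $\rho\equiv 1$ on $B_R$ already suffices.
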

\begin{proof}
Without loss of generality, assume that $t_0 = 0$, $x_0 = 0$.
Let $D_t = B_{R-|t|}$. Recalling that the kernels of $\cos(s|\nabla|), \frac{\sin(s|\nabla|)}{|\nabla|}$
are distributions supported in $B_{s}$, from \eqref{SNLW} we have that
$$ v_1-v_2|_{D_t} = \int_0^t \frac{\sin{((t-t')|\nabla|)}}{|\nabla|} \left.\left[(v_1-v_2)(3\wick{\psi^2} + 3\psi(v_1+v_2) + v_1^2+v_2^2 + v_1v_2)\right]\right|_{D_{t'}}\d t'.$$
Proceeding as in Proposition \ref{LWP}, we obtain that 
\begin{align*}
&\phantom{\lesssim} \norm{v_1(t)-v_2(t)}_{H^s(D_t)}\\ 
&\begin{multlined}\lesssim_R \int_0^t \norm{v_1-v_2}_{H^s(D_t')}
(1+ \norm{v_1}_{C([-R,R];H^s(B_R))}^2  + \norm{v_2}_{C([-R,R];H^s(B_R))}^2)\\ \times \max_{1\le j \le 3} \norm{\rho\wick{\psi^j}}_{L^1_tW^{0-,6+}_x}
\end{multlined}
\end{align*}
for any $\rho \in C_c^\infty$ with $\rho \equiv 1$ on $B_R$.
Therefore by Gronwall, 
$$ \norm{v_1(t)-v_2(t)}_{H^s(D_t)} \le C(R,v_1,v_2,\omega) \norm{v_1(0)-v_2(0)}_{H^s(B_R)}= 0.$$
\end{proof}
From this proposition, we obtain immediately the following:
\begin{Corollary} \label{constancy}
Let $T>0$, let $t_0=0$, and let $\rho_1$, $\rho_2$ be two cutoff functions such that $\rho_1(x) = \rho_2(x) = 1$ for
every $x \in B_{2T}$. Let $s>\frac45$. Let $(u_0,u_1)\in \H^s_\loc$, and let $v_1,v_2$ be respectively the solutions to 
\eqref{LSNLW} with cutoff function $\rho_1$ and $\rho_2$ and initial data respectively $(\rho_1u_0,\rho_1u_1)$
and $(\rho_2u_0,\rho_2u_1)$. Then $v_1(t,x) = v_2(t,x)$ for every $|x|,|t| < T$.
\end{Corollary}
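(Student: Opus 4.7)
The plan is to invoke finite speed of propagation, Proposition \ref{finitespeed}, at scale $R = 2T$ centred at $x_0 = 0$, $t_0 = 0$. Because $\rho_1 \equiv \rho_2 \equiv 1$ on $B_{2T}$, the LSNLW equations solved by $v_1$ and $v_2$ coincide with the unlocalised \eqref{SNLW} on this ball, and Proposition \ref{finitespeed} should identify $v_1$ and $v_2$ on the backward light cone $\{(t, x) : |x| + |t| < 2T\}$, which in particular contains $\{(t, x) : |x|, |t| < T\}$.

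Concretely, I would carry out three steps. First, invoke Proposition \ref{lgwp}: since $\rho_j$ has compact support, $(\rho_j u_0, \rho_j u_1) \in \H^s$ and hence $v_j \in C(\R; H^s(\R^2))$ globally. Second, note that $v_j(0) = \rho_j u_0 = u_0$ and $\partial_t v_j(0) = \rho_j u_1 = u_1$ on $B_{2T}$, so the two sets of initial data agree on $B(0, 2T)$. Third, verify that each $v_j$ satisfies \eqref{SNLWLocal} on the relevant cone: the LSNLW nonlinearity $N_j := v_j^3 + 3 v_j^2 \rho_j \psi + 3 v_j \rho_j \wick{\psi^2} + \rho_j \wick{\psi^3}$ differs from $\wick{(\psi + v_j)^3}$ by $-(1 - \rho_j)\bigl(3 v_j^2 \psi + 3 v_j \wick{\psi^2} + \wick{\psi^3}\bigr)$, which is supported in $B_{2T}^c$; combined with the support of the wave kernel $\sin((t - t')|\nabla|)/|\nabla|$ in $\{|y - x| \le t - t'\}$, the corresponding Duhamel difference vanishes at every $(t, x)$ with $|x| + |t| \le 2T$. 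Proposition \ref{finitespeed} then yields $v_1 = v_2$ on $B(0, 2T - |t|)$ for $|t| \le T$, and for $|x|, |t| < T$ this is exactly the claim.

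The one point requiring care is Step 3: Proposition \ref{finitespeed} nominally asks for \eqref{SNLWLocal} to hold throughout $B(0, 2T)$, whereas the Duhamel correction above only vanishes on the shrinking ball $B(0, 2T - |t|)$. However, a direct inspection of the proof of Proposition \ref{finitespeed} shows that the only use of the hypothesis is on $D_t = B(0, R - |t|)$; since $D_t = B(0, 2T - |t|)$ when $R = 2T$, the restricted identity that we can verify is exactly the one the proof consumes. The remainder is routine bookkeeping using the support properties of $\cos((t - t')|\nabla|)$ and $\sin((t - t')|\nabla|)/|\nabla|$.
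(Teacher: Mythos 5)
Your proof is correct and matches the paper's intended derivation, which simply invokes Proposition \ref{finitespeed} with $R = 2T$, $x_0 = 0$, $t_0 = 0$ (the paper presents the corollary as ``immediate''). The concern you raise in Step 3 is a genuine imprecision: the difference between the LSNLW and SNLW Duhamel terms, being driven by $(\rho_j - 1)(3v_j^2\psi + 3v_j\wick{\psi^2} + \wick{\psi^3})$ supported in $B_{2T}^c$, does \emph{not} vanish on all of $B_{2T}$ for $|t|>0$, only on the cone $|x|+|t|<2T$; your observation that the proof of Proposition \ref{finitespeed} consumes the hypothesis only on $D_t = B(0, R - |t|)$ is the correct resolution and closes the gap cleanly.
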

\begin{proof}[Proof of Theorem \ref{main}]
Let $\rho_n$ be a cutoff function such that $\rho_N(x) = 1$ for every $|x| \le n$. 
Let $(u_0,u_1) \in \H^s_\loc$, and let $v_n$ be the solution of \eqref{LSNLW} with cutoff function $\rho_n$ and initial data $(\rho_nu_0,\rho_nu_1)$. By Proposition \ref{lgwp}, we will have $v_n\in C(\R;\H^s)$. By Corollary \ref{constancy},  
$$v:= \lim_{n\to +\infty} v_n$$
is well defined, and we have that $v|_{[-T,T] \times B_R} = v_{\lceil T\vee R\rceil}|_{[-T,T] \times B_R}$.
By Proposition \ref{lgwp} again, $v$ will also be a continuous function of $(u_0,u_1)$ with 
values in $C([-T,T];\H^s_\loc)$. Therefore the theorem is proven if we show that every solution $\tilde u = \psi + \tilde v$ of \eqref{SNLW} with $v \in C([-T,T];\H^s_\loc)$ satisfies $\tilde v(t) = v(t)$ for every $t \le T$. Let $\phi \in C^\infty_c((-T,T) \times \R^2)$ be a test function. Let $n\in \N$ be such that $\supp(\phi) \subseteq [-n,n] \times B_n$.
By Proposition \ref{finitespeed}, we have that 
$$\tilde v|_{[-n,n] \times B_n} = v_n|_{[-n,n] \times B_n} = v|_{[-n,n] \times B_n}.$$
Therefore, $\dual{\tilde v}{\phi} = \dual{v}{\phi}$, so $\tilde v = v$ as space-time distributions. Since they both 
belong to the space $C([-T,T];\H^s_\loc)$, the equality must hold in the space $C([-T,T];\H^s_\loc)$ as well, hence $\tilde v(t) = v(t)$ for every $t\le T$.
\end{proof}

\end{document}